\newtheorem{definition}{Definition}[section]
\newtheorem{lemma}{Lemma}[section]
\newtheorem{property}{Property}[section]
\newtheorem{theorem}{Theorem}[section]
\newtheorem{example}{Example}[section]
\newcommand{\PreserveBackslash}[1]{\let\temp=\\#1\let\\=\temp}
\newcolumntype{C}[1]{>{\PreserveBackslash\centering}p{#1}}
\newcolumntype{R}[1]{>{\PreserveBackslash\raggedleft}p{#1}}
\newcolumntype{L}[1]{>{\PreserveBackslash\raggedright}p{#1}}
\begin{document}

\begin{frontmatter}




\title{Third order quasi-compact schemes for space tempered fractional diffusion equations \footnote{This work was supported by the National Natural Science Foundation of China under Grant  No. 11271173 and No. 11471150.}
}


\author{Yanyan Yu, Weihua Deng\footnote{Corresponding author. E-mail: dengwh@lzu.edu.cn}, Yujiang Wu, Jing Wu }

\address{School of Mathematics and Statistics, Gansu Key Laboratory of Applied Mathematics and Complex Systems, Lanzhou University, Lanzhou 730000, P.R. China}

\begin{abstract}
Power-law probability density function (PDF) plays a key role in both subdiffusion and L\'{e}vy flights. However, sometimes because of the finite of the lifespan of the particles or the boundedness of the physical space, tempered power-law PDF seems to be a more physical choice and then the tempered fractional operators appear; in fact, the tempered fractional operators can also characterize the transitions among subdiffusion, normal diffusion, and L\'{e}vy flights. This paper focuses on the quasi-compact schemes for space tempered fractional diffusion equations, being much different from the ones for pure fractional derivatives. By using the generation function of the matrix and Weyl's theorem, the stability and convergence of the derived schemes are strictly proved. Some numerical simulations are performed to testify the effectiveness and numerical accuracy of the obtained schemes.


\end{abstract}

\begin{keyword}
Tempered fractional calculus, Compact schemes, Stability, Convergence.




\end{keyword}

\end{frontmatter}
%

\section{Introduction}
The CTRW model, composed of waiting times and jump lengths, is a pillar of statistical physics to characterize the anomalous dynamics. The power-law waiting time distribution is generally used to describe the subdiffusion; and the power-law jump length distribution is applied to L\'{e}vy flights. Based on the corresponding CTRW models, the time, space, or time-space fractional diffusion equations are derived to govern the probability density function (PDF) of the particles \cite{RalfMetzler2000The, CompteAlbert1996Stochastic}. Even though this kind of models find wide applications, sometimes the tempered power-law distribution seems to be a more appropriate choice for some biological processes because of the finite lifespan of the particles or the bounded physical space.

The simplest way to do the tempering is to directly cut the very large jump sizes or very long waiting times. Mantegna and Stanley show that the truncated L\'{e}vy flight ultraslowly converges to a Gaussian \cite{MantegnaRN1994Stochastic}. Replacing the way of truncation in \cite{MantegnaRN1994Stochastic} with exponentially truncating  the L\'{e}vy flight, some analytic results to the problem of convergence of truncated L\'{e}vy flights towards the Gaussian stochastic process are presented \cite{KoponenI1995Analytic}. Exponentially tempering the power-law PDF of waiting times or jump lengths seems to become popular nowadays, since it can bring many technical conveniences \cite{Sabzikar2014Tempered}, e.g., making the tempered stochastic process still be L\'{e}vy process.
For capturing the slow convergence of sub-diffusion to a diffusion limit for passive tracers in heterogeneous media, the model with exponentially tempered power-law waiting time distribution is introduced in \cite{Meerschaert2009Tempered}. By exponentially truncating the L\'{e}vy jump distribution, Cartea and del-Castillo-Negrete propose the partial differential equation (PDE) to describe the solute transport in natural systems \cite{CarteaA2007Fluid}; and the truncation effects in superdiffusive front propagation is discussed in \cite{Del-Castillo-NegreteD.2009Truncation}.

Compared with the tempered fractional PDEs, the finite difference methods for fractional PDEs have been much more well developed, e.g., \cite{meerschaert2006finite, Gao2011A, 2012Crank, Chang-MingChen2010Numerical}. A nature idea is to use the Gr\"{u}nwald-Letnikov formula \cite{I1998} to approximate the Riemann-Liouville fractional derivative. Unfortunately, it is unconditionally unstable for the space fractional derivative. Meerschaert and his partners firstly propose the modified version of the Gr\"{u}nwald-Letnikov formula, i.e., the shift Gr\"{u}nwald formula, to effectively solve the space fractional diffusion equation \cite{tadjeran2006, tadjeran2007second}. More recently, a series of high order schemes, including the compact ones, for space fractional diffusion equation are designed. In \cite{Nasir}, the authors use the superconvergent point to get the second order scheme for the Riemann-Liouville fractional derivative. The second and third order WSGD operators are provided in \cite{Tian2012}, and a third order CWSGD operator is given in \cite{zhou2013quasi}. The related more high order schemes can be seen, e.g., \cite{cheninpress,yu2015t}.

For the numerical solution of the tempered fractional PDEs, based on simulating the trajectories of the particles and then performing their average, the algorithms are provided in \cite{Baeumer2010Tempered} and \cite{GajdaJ2010Fractional}. By directly discretizing the tempered fractional operators, a series of second order schemes are given in \cite{Li1996tempered}. Here we further design the third order quasi-compact schemes for the tempered fractional diffusion equation; and note that not only the designing of the scheme but the proof of its numerical stability and convergence is much different from the the ones of the fractional diffusion equation.  Using Weyl's theorem by decomposing the matrix and the generation function of the matrix, we strictly prove the numerical stability and convergence of the derived schemes. And the extensive numerical experiments are performed to confirm the convergence order of the schemes.

The outline of this paper is as follows. In the next section, together with the left and right shifted Gr\"{u}nwald-Letnikov tempered operator, we develop the third order quasi-compact approximations for the left and right Riemann-Liouville tempered fractional derivatives, respectively. And then we focus on discussing the stability and convergence of implicit difference schemes with third order accuracy in space in Section 3. In Section 4, some numerical experiments are carried out to confirm the reliability of the obtained results. We conclude the paper with some remarks in the last section.

\section{Derivation of the quasi-compact approximations for the tempered fractional derivatives}
 We begin with the definitions of $\alpha$-th order left and right Riemann-Liouville (RL) normalized tempered fractional derivatives
\cite{Sabzikar2014Tempered,CarteaA2007Fluid,Baeumer2010Tempered}.
\begin{definition}
If the function $u(x)$ defined in the finite interval $[a,b]$ is regular enough, then
  for any $\lambda\geq0$ the $\alpha$-th order left and right Riemann-Liouville normalized tempered fractional derivatives are, respectively, defined as
\begin{equation}
_a D_x^{\alpha,\lambda}u(x)=\frac{e^{-\lambda x}}{\Gamma(2-\alpha)}\frac{d^2}{dx^2} \int^{x}_{a}(x-s)^{1-\alpha }e^{\lambda s}u(s) ds-\lambda^\alpha u(x) -\alpha\lambda^{\alpha-1}\frac{d u(x)}{dx}=e^{-\lambda x}\,_a D_x^{\alpha}(e^{\lambda x}u(x))-\lambda^\alpha u(x) -\alpha\lambda^{\alpha-1}\frac{d u(x)}{dx}
\end{equation}
and
\begin{equation}
_x D_{b}^{\alpha,\lambda}u(x)=\frac{ e^{\lambda x}}{\Gamma(2-\alpha)}\frac{d^2}{dx^2} \int^{b}_{x}(s -x)^{1-\alpha }e^{-\lambda s}u(s) ds-\lambda^\alpha u(x) +\alpha\lambda^{\alpha-1}\frac{d u(x)}{dx}=e^{\lambda x}\,_x D_{b}^{\alpha}(e^{-\lambda x}u(x))-\lambda^\alpha u(x) +\alpha\lambda^{\alpha-1}\frac{d u(x)}{dx} ,
\end{equation}
where $ 1<\alpha<2$.
Moreover, if $\lambda=0$, then the  Riemann-Liouville normalized  tempered fractional derivatives $_{a} D_x^{\alpha,\lambda}u(x)$ and $_x D_{b}^{\alpha,\lambda}u(x)$ reduce to the Riemann-Liouville  fractional derivatives $_{a} D_x^{\alpha}u(x)$ and $_x D_{b}^{\alpha}u(x)$, respectively.
\end{definition}
For the convenience of presentation, we denote
\[\,_a D_x^{(\alpha,\lambda)}u(x)= e^{-\lambda x}\,_a D_x^{\alpha}(e^{\lambda x}u(x))\]
  and
\[\,_x D_{b}^{(\alpha,\lambda)}u(x)=e^{\lambda x}\,_x D_{b}^{\alpha}(e^{-\lambda x}u(x)).\]
Then the left and right RL tempered fractional derivatives can be rewritten as
\begin{equation}
_a D_x^{\alpha,\lambda}u(x)=\,_a D_x^{(\alpha,\lambda)}u(x) -\lambda^\alpha u(x) -\alpha\lambda^{\alpha-1}\frac{d u(x)}{dx}
\end{equation}
and
\begin{equation}
_x D_{b}^{\alpha,\lambda}u(x)=\,_x D_{b}^{(\alpha,\lambda)}u(x) -\lambda^\alpha u(x) +\alpha\lambda^{\alpha-1}\frac{d u(x)}{dx} .
\end{equation}

Now we derive the quasi-compact approximations for the derivatives $\,_{-\infty} D_x^{\alpha,\lambda}u(x)$ and $\,_ x D_{+\infty}^{\alpha,\lambda}u(x)$, respectively.
Firstly, if the function $u(x)$ is regular enough, then   $\,_{-\infty} D_x^{(1,\lambda)}u(x)$ and $\,_x D_{+\infty}^{(1,\lambda)}u(x)$  are  equivalent to $\lambda u(x)+\frac{d u(x)}{dx}$ and $\lambda u(x) -\frac{d u(x)}{dx}$, respectively, i.e.,
\begin{equation}
 \begin{array}{l }
\displaystyle
 \,_{-\infty} D_x^{(1,\lambda)}u(x)=e^{-\lambda x}\,_{-\infty} D_x^{1}(e^{\lambda x}u(x))
=e^{-\lambda x} \frac{d}{dx}(e^{\lambda x}u(x))
=e^{-\lambda x}\left(\lambda e^{\lambda x}u(x)+e^{\lambda x}\frac{d u(x)}{dx}\right)
=\lambda u(x)+\frac{d u(x)}{dx},
 \end{array}
\end{equation}
\begin{equation}
 \begin{array}{l }
\displaystyle
 \,_x D_{+\infty}^{(1,\lambda)}u(x)=e^{\lambda x}\,_x D_{+\infty}^{1}(e^{-\lambda x}u(x))
=-1\cdot e^{\lambda x} \frac{d}{dx}(e^{-\lambda x}u(x))
=-e^{\lambda x}\left(-\lambda e^{-\lambda x}u(x)+e^{-\lambda x}\frac{d u(x)}{dx}\right)
= \lambda u(x)-\frac{d u(x)}{dx} .
 \end{array}
\end{equation}
Recalling the definitions of the left and right RL tempered fractional derivatives, we get
\begin{equation}
 \begin{array}{l }
\displaystyle
\,_{-\infty} D_x^{\alpha,\lambda}u(x)=\,_{-\infty} D_x^{(\alpha,\lambda)}u(x) -\lambda^\alpha u(x) -\alpha\lambda^{\alpha-1}\frac{d u(x)}{dx}\\\\
\displaystyle~~~~ ~~~~ ~~~~ ~~~~ ~~~~
=\,_{-\infty} D_x^{(\alpha,\lambda)}u(x) -\alpha\lambda^{\alpha-1}\left(\lambda u(x) +\frac{d u(x)}{dx}\right)+\lambda^\alpha(\alpha-1) u(x)
\end{array}
\end{equation}
and
\begin{equation}
 \begin{array}{l }
\displaystyle
_x D_{+\infty}^{\alpha,\lambda}u(x)=\,_x D_{+\infty}^{(\alpha,\lambda)}u(x) -\lambda^\alpha u(x) +\alpha\lambda^{\alpha-1}\frac{d u(x)}{dx}\\\\
\displaystyle~~~~ ~~~~ ~~~~ ~~~~
=\,_x D_{+\infty}^{(\alpha,\lambda)}u(x) -\alpha\lambda^{\alpha-1}\left( \lambda u(x) -\frac{d u(x)}{dx}\right)+\lambda^\alpha(\alpha-1) u(x).
\end{array}
\end{equation}
Thus the left and right RL tempered fractional derivatives at $x\in \mathbb{R} $ can be rearranged as
\begin{equation}\label{lr11}
\,_{-\infty} D_x^{\alpha,\lambda}u(x)=
\,_{-\infty} D_x^{(\alpha,\lambda)}u(x) -\alpha\lambda^{\alpha-1}\,_{-\infty} D_x^{(1,\lambda)}u(x)+\lambda^\alpha(\alpha-1) u(x)
\end{equation}
and
\begin{equation}\label{lr22}
_x D_{+\infty}^{\alpha,\lambda}u(x)
=\,_x D_{+\infty}^{(\alpha,\lambda)}u(x) -\alpha\lambda^{\alpha-1}\,_x D_{+\infty}^{(1,\lambda)}u(x)+\lambda^\alpha(\alpha-1) u(x),
\end{equation}
which play an important role in the following discussions.
Secondly, in  \cite{Baeumer2010Tempered}, Baeumer et al. give an asymptotic expansion of the derivative $\,_{-\infty} D_x^{(\alpha,\lambda)}u(x)$ in the shift Gr\"{u}nwald difference formula, which is useful in constructing high order approximations.

\begin{lemma} \label{2.1}
 Let $1<\alpha<2$,   $u\in W^{n+\alpha,1}(\mathbb{R})$. For any integer $p$ and   $\lambda\geq0$, if we
define the left and right shifted Gr\"{u}nwald-Letnikov (GL) tempered operators by
\begin{equation} \label{shifd}
\Delta_p^{(\alpha,\lambda)} u(x):= \frac{1}{h^\alpha}\sum\limits_{k=0}^\infty g_k^{(\alpha)}e^{-(k-p)\lambda h}u(x-(k-p)h)
\end{equation}
and
\begin{equation} \label{rshifd}
\Lambda_p^{(\alpha,\lambda)} f(x):= \frac{1}{h^\alpha}\sum\limits_{k=0}^\infty g_k^{(\alpha)}e^{-(k-p)\lambda h}u(x+(k-p)h),
\end{equation}
where $h$ is stepsize,
then there are
\begin{equation} \label{shifa}
\Delta_p^{(\alpha,\lambda)} u(x)=\,_{-\infty}D^{(\alpha,\lambda)}_x u(x)+\sum\limits_{k=1}^{n-1}a_{p}^{\alpha, k} \, _{-\infty}D_x^{(\alpha+k,\lambda)}u(x)h^k+O(h^n)
\end{equation}
and
\begin{equation} \label{rshifa}
\Lambda_p^{(\alpha,\lambda)} u(x)=\,_x D_{ +\infty} ^{(\alpha,\lambda)}u(x)+\sum\limits_{k=1}^{n-1}a_{p}^{\alpha, k} \, _x D_{ +\infty}^{(\alpha+k,\lambda)}u(x)h^k+O(h^n)
\end{equation}
uniformly in $x\in  \mathbb{R} $, where  the weights $g_k^{(\alpha)}= \frac{\Gamma(k-\alpha)}{\Gamma(-\alpha)\Gamma(k+1)}$
and  $a_{p}^{\alpha, k}$ are the coefficients of the power series expansion of the functions  $(1-z)^\alpha $ and  $ \mathfrak{w} _p(z)=\left(\frac{1-e^{-z}}{z}\right)^\alpha e^{pz}$, respectively, and the first three terms of the coefficients of $ \mathfrak{w}_p(z)$ are
 \[a_{p}^{\alpha, 0}=1 , \quad a_{p}^{\alpha, 1}=p-\frac{\alpha}{2} , \quad   a_{p}^{\alpha,2}= \frac{\alpha+3\alpha^2-12\alpha p+12p^2}{24}.  \]
\end{lemma}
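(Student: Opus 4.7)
The plan is to reduce the tempered expansion to the classical untempered shifted Gr\"unwald result via the conjugation $v(x) := e^{\lambda x} u(x)$. Because the exponentials telescope inside each sum, one checks directly that
\begin{equation*}
\Delta_p^{(\alpha,\lambda)} u(x) = e^{-\lambda x}\,\Delta_p^{\alpha} v(x), \qquad \,_{-\infty}D_x^{(\alpha+k,\lambda)} u(x) = e^{-\lambda x}\,_{-\infty}D_x^{\alpha+k} v(x),
\end{equation*}
where $\Delta_p^{\alpha} := \Delta_p^{(\alpha,0)}$ is the standard (non-tempered) shifted Gr\"unwald operator. Pulling out the common factor $e^{-\lambda x}$, the tempered expansion \eqref{shifa} is therefore equivalent to the untempered expansion
\begin{equation*}
\Delta_p^{\alpha} v(x) = \,_{-\infty}D_x^{\alpha} v(x) + \sum_{k=1}^{n-1} a_p^{\alpha,k}\,_{-\infty}D_x^{\alpha+k} v(x)\, h^k + O(h^n)
\end{equation*}
for a function $v$ of the same Sobolev regularity as $u$.

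To establish this untempered expansion, I would use Fourier analysis on $\mathbb{R}$. The binomial identity $\sum_{k\ge 0} g_k^{(\alpha)} z^k = (1-z)^\alpha$ combined with the translation rule $\mathcal{F}[v(\cdot - y)](\omega) = e^{-iy\omega}\hat v(\omega)$ yields the Fourier symbol
\begin{equation*}
\mathcal{F}\bigl[\Delta_p^{\alpha} v\bigr](\omega) = h^{-\alpha}(1-e^{-i\omega h})^\alpha e^{ip\omega h}\,\hat v(\omega) = (i\omega)^\alpha\,\mathfrak{w}_p(i\omega h)\,\hat v(\omega),
\end{equation*}
which matches $\mathcal{F}[\,_{-\infty}D_x^{\alpha} v](\omega) = (i\omega)^\alpha \hat v(\omega)$ up to the analytic multiplier $\mathfrak{w}_p(i\omega h)$. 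Writing $\mathfrak{w}_p(i\omega h) = \sum_{k=0}^{n-1} a_p^{\alpha,k}(i\omega h)^k + R_n(i\omega h)$ and inverting the Fourier transform term by term produces the desired main terms $\sum_{k=0}^{n-1} a_p^{\alpha,k} h^k\,_{-\infty}D_x^{\alpha+k} v(x)$. The three leading coefficients $a_p^{\alpha,0}, a_p^{\alpha,1}, a_p^{\alpha,2}$ are then read off by multiplying the Taylor series of $((1-e^{-z})/z)^\alpha$ and $e^{pz}$ and collecting powers of $z$ up to order two.

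The main technical obstacle is controlling the remainder $\int R_n(i\omega h)(i\omega)^\alpha \hat v(\omega) e^{i\omega x}\,d\omega$ uniformly in $x$. I would split the frequency axis at $|\omega h| = 1$: on the low-frequency side, analyticity of $\mathfrak{w}_p$ at the origin gives $|R_n(i\omega h)| \le C|\omega h|^n$, so the integrand is dominated by $C h^n|\omega|^{n+\alpha}|\hat v(\omega)|$, which is uniformly integrable thanks to $v \in W^{n+\alpha,1}(\mathbb{R})$; on the high-frequency side, $|\mathfrak{w}_p(i\omega h)|$ is uniformly bounded while $|\omega h|^{-n} \le 1$, so the contribution is likewise absorbed into $C h^n\|v\|_{W^{n+\alpha,1}}$. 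Reinstating the factor $e^{-\lambda x}$ then yields \eqref{shifa}. The right-sided statement \eqref{rshifa} follows by the same argument applied to the reflected function $\tilde u(x) := u(-x)$, under which left shifted Gr\"unwald sums turn into right shifted sums and $\,_{-\infty}D_x^{(\alpha,\lambda)}$ is replaced by $\,_x D_{+\infty}^{(\alpha,\lambda)}$.
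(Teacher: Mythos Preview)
The paper does not actually prove this lemma; it is quoted from Baeumer and Meerschaert \cite{Baeumer2010Tempered} without argument. Your conjugation $v=e^{\lambda x}u$ is the natural reduction and the Fourier-symbol computation is the standard route, but as written the proposal has two genuine gaps.

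First, the assertion that $v$ has ``the same Sobolev regularity as $u$'' is not justified. Multiplication by the unbounded weight $e^{\lambda x}$ need not preserve membership in $W^{n+\alpha,1}(\mathbb{R})$: any $u\in W^{n+\alpha,1}$ with merely polynomial decay as $x\to+\infty$ produces a $v\notin L^1$. Second, even granting a uniform $O(h^n)$ remainder for $v$, ``reinstating the factor $e^{-\lambda x}$'' destroys uniformity in $x$, because $e^{-\lambda x}\to\infty$ as $x\to-\infty$; you would obtain only $e^{-\lambda x}\cdot O(h^n)$, not the uniform bound asserted in \eqref{shifa}.

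Both issues disappear if you run the Fourier analysis \emph{directly} on the tempered operator rather than detouring through $v$. A short computation gives
\[
\mathcal{F}\bigl[\Delta_p^{(\alpha,\lambda)}u\bigr](\omega)
= h^{-\alpha}\bigl(1-e^{-(\lambda+i\omega)h}\bigr)^\alpha e^{p(\lambda+i\omega)h}\,\hat u(\omega)
=(\lambda+i\omega)^\alpha\,\mathfrak{w}_p\bigl((\lambda+i\omega)h\bigr)\,\hat u(\omega),
\]
while $\,_{-\infty}D_x^{(\alpha+k,\lambda)}$ has symbol $(\lambda+i\omega)^{\alpha+k}$. Expanding $\mathfrak{w}_p$ about the origin in the variable $z=(\lambda+i\omega)h$ and bounding the remainder by $Ch^n\!\int |\lambda+i\omega|^{n+\alpha}|\hat u(\omega)|\,d\omega$, which is finite under the hypothesis $u\in W^{n+\alpha,1}(\mathbb{R})$, yields the uniform $O(h^n)$ estimate without ever forming $e^{\lambda x}u$. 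This is essentially the argument in \cite{Baeumer2010Tempered}. Your reflection argument for \eqref{rshifa} is fine.
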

According to the definitions of the shifted GL tempered fractional derivatives and Lemma \ref{lemma3}, we construct the following third order quasi-compact approximations for the derivatives $\,_{-\infty} D_x^{(1,\lambda)}u(x)$ and $\,_ x D_{+\infty}^{(1,\lambda)}u(x)$.

%



\begin{theorem}\label{th11}
Suppose  $u\in W^{4,1}(\mathbb{ R})$. Define the difference operators by
 \begin{equation} \label{oper1}
 \begin{array}{l}
\displaystyle
   \,_{L}D ^{( 1,\lambda )}_h   u(x) =  \frac{1}{2}\Delta_0^{(1,\lambda)} u(x)+ \frac{1}{2}\Delta_1^{(1,\lambda)} u(x)
   =  \frac{1}{2h } ( e^{\lambda h} u(x+ h )-e^{- \lambda h}u(x- h))
 \end{array}
 \end{equation}
 and
  \begin{equation} \label{oper2}
 \begin{array}{l}
\displaystyle
   \,_{R}D ^{( 1,\lambda )}_h   u(x) =  \frac{1}{2}\Lambda_0^{(1,\lambda)} u(x)+ \frac{1}{2}\Lambda_1^{(1,\lambda)} u(x)
   =  \frac{1}{2h } ( e^{\lambda h} u(x- h )-e^{- \lambda h}u(x+h)).
 \end{array}
 \end{equation}
Then we have
\begin{equation}\label{1lrc}
\begin{array}{l}
\displaystyle
 \,_{L}D_{h}^{(1,\lambda)}   u(x) = \left(1+ \frac{1}{6}   h^2  \,_{-\infty}D^{( 2,\lambda)}_x  \right) \,_{-\infty}D^{(1,\lambda)}_x u(x)+O(h^3), \\\\
\displaystyle \,_{R}D_{h}^{(1,\lambda)} u(x)=\left(1+ \frac{1}{6} h^2  \,_x D^{(2,\lambda)}_{+\infty} \right)   \,_x D^{(1,\lambda)}_{+\infty} u(x)+O(h^3)
\end{array}
\end{equation}
uniformly for $x\in \mathbb{R} $.
Furthermore, the compact approximations to the derivatives $\,_{-\infty} D_x^{(1,\lambda)}u(x)$ and $\,_x D_{ +\infty}^{(1,\lambda)}u(x)$ are
\begin{equation}
 \,_{L}\mathcal{B} \,_{-\infty}D^{ (1,\lambda )}_x u(x) = \,_{L}D ^{( 1,\lambda) }_h   u(x)+O(h^3)
\end{equation}
and
\begin{equation}\label{th1.2}
  \,_{R}\mathcal{B} \,_x D_{\infty}^{(1,\lambda)}u(x) = \,_{R}D ^{ (1,\lambda) }_h   u(x)+O(h^3),
\end{equation}
respectively, where the compact  difference operators are
\begin{equation*}
  \,_{L} \mathcal{B}u(x)=u(x)+\frac{  h^2}{6}e^{-\lambda x}\delta_x^2(e^{ \lambda x} u(x)) =\frac{1}{6} e^{-\lambda h}u(x-h)+\frac{2}{3} u(x)+\frac{1}{6} e^{ \lambda h}u(x+h) ,
\end{equation*}
\begin{equation*}
  \,_{R} \mathcal{B}u(x)=u(x)+\frac{  h^2}{6}e^{\lambda x}\delta_x^2(e^{- \lambda x} u(x)) =\frac{1}{6} e^{\lambda h}u(x-h)+\frac{2}{3} u(x)+\frac{1}{6} e^{ -\lambda h}u(x+h)
\end{equation*}
and
$\delta_x^2 u(x)=(u(x-h)-2 u(x)+u(x+h))/h^2$.
\end{theorem}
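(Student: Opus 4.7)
The plan is to first verify the closed-form two-point expressions for $\,_L D_h^{(1,\lambda)}u$ and $\,_R D_h^{(1,\lambda)}u$, then extract the asymptotic expansion (\ref{1lrc}) by specializing Lemma~\ref{2.1} to $\alpha=1$ and averaging the shifts $p=0,1$, and finally bridge to the compact form by recognizing $\,_L\mathcal{B}$ as $I+\frac{h^2}{6}$ times the tempered second-order operator modulo $O(h^2)$. Since the right-sided case is entirely symmetric, I focus on the left-sided statements. The closed forms are immediate: specializing $(1-z)^\alpha$ to $\alpha=1$ gives $g_0^{(1)}=1$, $g_1^{(1)}=-1$, and $g_k^{(1)}=0$ for $k\ge 2$, so (\ref{shifd}) collapses to $\Delta_p^{(1,\lambda)}u(x)=h^{-1}\bigl(e^{p\lambda h}u(x+ph)-e^{(p-1)\lambda h}u(x-(1-p)h)\bigr)$, and averaging $p=0$ and $p=1$ yields the centered difference claimed in (\ref{oper1}).

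Next, I would apply Lemma~\ref{2.1} with $\alpha=1$ and $n=3$ (its derivation through the generating function $(1-z)^\alpha e^{pz}$ carries over verbatim at the endpoint $\alpha=1$, where all relevant quantities remain analytic in $z$) to obtain, for each shift $p$,
\[
\Delta_p^{(1,\lambda)}u(x) = \,_{-\infty}D_x^{(1,\lambda)}u(x) + \bigl(p-\tfrac{1}{2}\bigr)h\,_{-\infty}D_x^{(2,\lambda)}u(x) + \tfrac{1-3p+3p^2}{6}h^2\,_{-\infty}D_x^{(3,\lambda)}u(x) + O(h^3).
\]
Averaging $p=0$ and $p=1$ cancels the $O(h)$ term and produces the common coefficient $\frac{1}{6}$ in front of $h^2\,_{-\infty}D_x^{(3,\lambda)}u$. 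Since the integer-order tempered derivative satisfies the clean identity $\,_{-\infty}D_x^{(n,\lambda)}w = e^{-\lambda x}\partial_x^n(e^{\lambda x}w)$, the semigroup relation $\,_{-\infty}D_x^{(3,\lambda)} = \,_{-\infty}D_x^{(2,\lambda)}\,_{-\infty}D_x^{(1,\lambda)}$ is immediate and (\ref{1lrc}) follows. For the compact form, I would set $v=e^{\lambda x}u$ and note that $\,_L\mathcal{B}u(x) = e^{-\lambda x}\bigl(v(x)+\frac{h^2}{6}\delta_x^2 v(x)\bigr)$; a single Taylor expansion of $\delta_x^2 v$ then produces the three-point stencil stated. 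Applying $\,_L\mathcal{B}$ to $\,_{-\infty}D_x^{(1,\lambda)}u$ and using the classical estimate $\delta_x^2 w=w''+O(h^2)$ for $w\in W^{4,1}(\mathbb{R})$ gives
\[
\,_L\mathcal{B}\,_{-\infty}D_x^{(1,\lambda)}u(x) = \bigl(1+\tfrac{h^2}{6}\,_{-\infty}D_x^{(2,\lambda)}\bigr)\,_{-\infty}D_x^{(1,\lambda)}u(x) + O(h^4),
\]
which matches the right-hand side of (\ref{1lrc}) to order $O(h^3)$ and therefore coincides with $\,_L D_h^{(1,\lambda)}u(x)+O(h^3)$.

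The main obstacle is careful bookkeeping of error orders: one has to track simultaneously the $O(h^3)$ truncation of the shifted Gr\"unwald--Letnikov expansion, the \emph{exact} cancellation of the $O(h)$ contribution upon averaging $p=0,1$, and the second-order consistency of $\delta_x^2$, verifying that the latter—once multiplied by $h^2/6$—only contributes a harmless $O(h^4)$. One should also confirm that the assumption $u\in W^{4,1}(\mathbb{R})$ provides enough regularity to justify both the tempered Gr\"unwald remainder and the symmetric second-difference estimate. Beyond these accounting issues, the computation is essentially symbolic.
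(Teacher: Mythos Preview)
Your proposal is correct and follows essentially the same route as the paper: specialize Lemma~\ref{2.1} to $\alpha=1$, average the shifts $p=0,1$ to kill the $O(h)$ term and obtain the $\tfrac{1}{6}h^2\,_{-\infty}D_x^{(3,\lambda)}u$ contribution, then replace $1+\tfrac{h^2}{6}\,_{-\infty}D_x^{(2,\lambda)}$ by the three-point operator $\,_L\mathcal{B}$ via the standard $\delta_x^2 w = w'' + O(h^2)$ estimate. The only point worth noting is that Lemma~\ref{2.1} is stated for $1<\alpha<2$, yet both you and the paper invoke it at $\alpha=1$; you at least flag this explicitly, and the extension is indeed harmless since the generating-function argument is analytic in $\alpha$ there.
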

\begin{proof}
By Lemma \ref{2.1}, if we take $\alpha=1$ and $p=0$ and $1$ in (\ref{shifd}) and  (\ref{rshifd}), respectively, then there exist
\begin{equation} \label{shifd01}
\Delta_0^{(1,\lambda)} u(x) = \frac{1}{h } ( u(x )-e^{- \lambda h}u(x- h)),\quad \Delta_{1}^{(1,\lambda)} u(x) = \frac{1}{h } (e^{\lambda h} u(x+ h )-u(x))
\end{equation}
and
\begin{equation} \label{rshifd01}
\Lambda_0^{(1,\lambda)} u(x) = \frac{1}{h } ( u(x )-e^{- \lambda h}u(x+h)),\quad\Lambda_{1}^{(1,\lambda)} u(x) = \frac{1}{h } (e^{\lambda h} u(x- h )-u(x)).
\end{equation}
From (\ref{shifa}) and (\ref{rshifa}), it's easy to  check that
\begin{equation} \label{thd11}
\Delta_p^{(1,\lambda)} u(x)=\,_{-\infty}D^{(1,\lambda)}_x u(x)+\sum\limits_{k=1}^{n-1}a_{p}^{1, k} \, _{-\infty}D_x^{(1+k,\lambda)}u(x)h^k+O(h^n),\quad p=0,1
\end{equation}
and
\begin{equation} \label{thl11}
\Lambda_p^{(1,\lambda)} u(x)=\,_x D_{ +\infty} ^{(1,\lambda)}u(x)+\sum\limits_{k=1}^{n-1}a_{p}^{1, k} \, _x D_{ +\infty}^{(1+k,\lambda)}u(x)h^k+O(h^n),\quad p=0,1
\end{equation}
hold.
Combining  (\ref{oper1}) and (\ref{oper2}) with (\ref{thd11}) and (\ref{thl11}), respectively, we get
\begin{equation}\label{th1.11}
 \begin{array}{l}
\displaystyle
   \,_{L}D ^{ (1,\lambda )}_h   u(x) = \frac{1}{2} \Delta_0^{(1,\lambda)} u(x)+  \frac{1}{2}\Delta_1^{(1,\lambda)} u(x) \\\\
   \displaystyle~~~~ ~~~~ ~~~~ ~~~~~~~~
   =  \,_{-\infty}D^{(1,\lambda)}_x u(x)+ \frac{1}{6}  \,_{-\infty}D^{(3,\lambda)}_x u(x)h^2+O(h^3)\\\\
   \displaystyle~~~~ ~~~~ ~~~~ ~~~~~~~~
    =   \left(1+ \frac{1}{6}   h^2  \,_{-\infty}D^{( 2,\lambda)}_x  \right)\,_{-\infty}D^{(1,\lambda)}_x u(x) +O(h^3)
 \end{array}
 \end{equation}
 and
 \begin{equation}\label{th1.12}
   \,_{R}D ^{ (1,\lambda )}_h   u(x)
    =   \left(1+ \frac{1}{6} h^2 \,_x D^{( 2,\lambda)}_ {+\infty} \right)\,_x D^{(1,\lambda)}_{+\infty} u(x) +O(h^3).
 \end{equation}
 Thus Equations (\ref{1lrc}) hold.

Next we establish the discretizations of the operators $1+ \frac{1}{6} h^2 \,_ {-\infty} D^{( 2,\lambda)}_x$ and $1+ \frac{1}{6} h^2 \,_x D^{( 2,\lambda)}_ {+\infty}$. The difference operator  $\delta_x^2 u(x)=(u(x-h)-2 u(x)+u(x+h))/h^2=\frac{\partial^2 u(x)}{\partial x^2}+O(h^2)$ yields that
\begin{equation*}
 \begin{array}{l}
\displaystyle
 \left(1+ \frac{1}{6}   h^2  \,_{-\infty}D^{( 2,\lambda)}_x  \right)  u(x)
=   u(x)+ \frac{1}{6}   h^2 e^{-\lambda x} \frac{d^2}{dx^2} \left(e^{\lambda x}   u(x)\right)
=   u(x)+ \frac{1}{6}   h^2 e^{-\lambda x} \delta_x^2 \left(e^{\lambda x}   u(x)\right) +O(h^4)
\\\\
\displaystyle ~~~~ ~~~~ ~~~~ ~~~~ ~~~~ ~~~~ ~~~~ ~~~~ ~~~~
=  u(x)+ \frac{1}{6}    e^{-\lambda x}  \left(e^{\lambda (x-h)}    u(x-h)-2e^{\lambda x}    u(x)+
e^{\lambda (x+h)}   u(x+h)\right)+O(h^4)
\\\\
\displaystyle ~~~~ ~~~~ ~~~~ ~~~~ ~~~~ ~~~~ ~~~~ ~~~~ ~~~~
=  \frac{1}{6}     e^{-\lambda h }    u(x-h)+\frac{2}{3}    u(x)+ \frac{1}{6}
e^{\lambda h}    u(x+h)+O(h^4)\\\\
\displaystyle ~~~~ ~~~~ ~~~~ ~~~~ ~~~~ ~~~~ ~~~~ ~~~~ ~~~~
=  \,_{L} \mathcal{B}u(x)  +O(h^4)
\end{array}
\end{equation*}
and
\begin{equation*}
 \begin{array}{l}
\displaystyle
 \left(1+ \frac{1}{6}   h^2  \,_x D^{( 2,\lambda)}_{+\infty}  \right)  u(x)%
=  \frac{1}{6}     e^{\lambda h }    u(x-h)+\frac{2}{3}    u(x)+ \frac{1}{6}
e^{-\lambda h}    u(x+h)+O(h^4)\\\\
\displaystyle ~~~~ ~~~~ ~~~~ ~~~~ ~~~~ ~~~~ ~~~~ ~~~~ ~~~~
=  \,_{L} \mathcal{B}u(x)  +O(h^4)
\end{array}
\end{equation*}
hold.
Then we have
\begin{equation*}
 \begin{array}{l}
\displaystyle
 \left(1+ \frac{1}{6}   h^2  \,_{-\infty}D^{( 2,\lambda)}_x  \right)\,_{-\infty}D^{(1,\lambda)}_x u(x)
=  \frac{1}{6}     e^{-\lambda h }   \,_{-\infty}D^{(1,\lambda)}_x u(x-h)+\frac{2}{3}   \,_{-\infty}D^{(1,\lambda)}_x u(x)+ \frac{1}{6}
e^{\lambda h}   \,_{-\infty}D^{(1,\lambda)}_x u(x+h)+O(h^4)
\\\\
\displaystyle~~~~ ~~~~ ~~~~ ~~~~ ~~~~ ~~~~ ~~~~ ~~~~ ~~~~~~~~ ~~~~ ~~~~~~~~
=  \,_{L} \mathcal{B}\,_{-\infty}D^{(1,\lambda)}_x u(x)  +O(h^4)
\end{array}
\end{equation*}
and
\begin{equation*}
 \begin{array}{l}
\displaystyle
 \left(1+ \frac{1}{6}   h^2  \,_x D^{( 2,\lambda)}_{+\infty}  \right)\,_x D^{(1,\lambda)}_{+\infty} u(x)
=  \,_{R} \mathcal{B}\,_x D^{(1,\lambda)}_{+\infty} u(x)  +O(h^4).
\end{array}
\end{equation*}
Together with (\ref{th1.11}) and (\ref{th1.12}), we obtain
 \begin{equation*}
  \,_{L}\mathcal{B} \,_x D_{\infty}^{(1,\lambda)}u(x) =  \,_{L}D ^{ (1,\lambda )}_h   u(x)+O(h^3)
\end{equation*}
and
 \begin{equation*}
  \,_{R}\mathcal{B} \,_x D_{\infty}^{(1,\lambda)}u(x) =  \,_{R}D ^{ (1,\lambda )}_h   u(x)+O(h^3).
\end{equation*}

 \end{proof}

 Next using the compact difference operators $  \,_{L}\mathcal{B}$ and $  \,_{R}\mathcal{B}$,  we derive the corresponding third order quasi-compact approximations to  the  derivatives $\,_{-\infty} D_x^{(\alpha,\lambda)}u(x)$ and $\,_x D_{+\infty} ^{(\alpha,\lambda)}u(x)$.

\begin{theorem}\label{th222}
Let $1<\alpha<2$,   $u\in W^{3+\alpha,1}(  \mathbb{R}  )$. Define the  difference  operators by
\begin{equation}\label{lcom}
\begin{array}{l}
\displaystyle
 \,_{L}D_{h}^{(\alpha,\lambda)}   u(x)
 =  \mu_{-1}\Delta_{-1}^{(\alpha,\lambda)} u(x)+ \mu_{0}\Delta_{0}^{(\alpha,\lambda)} u(x) + \mu_{1}\Delta_{1}^{(\alpha,\lambda)} u(x),  \\\\
\displaystyle \,_{R}D_{h}^{(\alpha,\lambda)} u(x)=  \mu_{-1}\Lambda_{-1}^{(\alpha,\lambda)} u(x)+ \mu_{0}\Lambda_{0}^{(\alpha,\lambda)} u(x) + \mu_{1}\Lambda_{1}^{(\alpha,\lambda)} u(x),
\end{array}
\end{equation}
respectively, where 
the coefficients satisfy
\begin{equation}\label{coe}
\mu_{-1}=\frac{1}{24} (4 - 7\alpha + 3 \alpha^2),\quad
\mu_{0} =\frac{1}{12}  (8 + \alpha - 3\alpha^2),\quad
\mu_{1}=\frac{1}{24} (4 + 5 \alpha + 3 \alpha^2).
\end{equation}
Then there exist
\begin{equation}\label{lrc}
\begin{array}{l}
\displaystyle
 \,_{L}D_{h}^{(\alpha,\lambda)}   u(x) =  \left(1+ \frac{1}{6}   h^2  \,_{-\infty}D^{( 2,\lambda)}_x  \right) \,_{-\infty}D^{(\alpha,\lambda)}_x u(x)+O(h^3), \\\\
\displaystyle \,_{R}D_{h}^{(\alpha,\lambda)} u(x)= \left(1+ \frac{1}{6}   h^2  \,_xD^{( 2,\lambda)}_{+\infty}  \right)  \,_x D^{(\alpha,\lambda)}_{+\infty} u(x)+O(h^3)
\end{array}
\end{equation}
uniformly for $x\in  \mathbb{R} $.
Furthermore,  the following two quasi-compact  approximations have third order accuracy, i.e.,
\begin{equation}\label{th2.3}
 \,_{L} \mathcal{B} \,_{-\infty}D^{ (\alpha,\lambda )}_x u(x) = \,_{L}D ^{( \alpha,\lambda) }_h   u(x)+O(h^3)
\end{equation}
and
\begin{equation}\label{th2.4}
  \,_{R}\mathcal{B} \,_x D_{\infty}^{(\alpha,\lambda)}u(x) = \,_{R}D ^{ (\alpha,\lambda) }_h   u(x)+O(h^3).
\end{equation}
\end{theorem}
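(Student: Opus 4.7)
The plan is to extend the construction used in Theorem \ref{th11} by combining three shifted Gr\"unwald--Letnikov tempered operators at $p=-1,0,1$, and to choose $\mu_{-1},\mu_0,\mu_1$ so that the leading three terms of the combined expansion reproduce the target operator $1+\tfrac{h^2}{6}\,_{-\infty}D_x^{(2,\lambda)}$ acting on $\,_{-\infty}D_x^{(\alpha,\lambda)}u$. The crucial ingredient is the semigroup identity
\[
\,_{-\infty}D_x^{(2,\lambda)}\,_{-\infty}D_x^{(\alpha,\lambda)}u(x)=\,_{-\infty}D_x^{(\alpha+2,\lambda)}u(x),
\]
which follows directly from $\,_{-\infty}D_x^{(\beta,\lambda)}v = e^{-\lambda x}\,_{-\infty}D_x^{\beta}(e^{\lambda x}v)$ together with the semigroup property of the Riemann--Liouville derivative on $\mathbb{R}$; this identity is what allows us to match the target compact operator term-by-term against the Gr\"unwald series of Lemma \ref{2.1}.

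First, I would apply Lemma \ref{2.1} with $n=3$ and $p\in\{-1,0,1\}$ to write
\[
\Delta_p^{(\alpha,\lambda)}u(x)=\,_{-\infty}D_x^{(\alpha,\lambda)}u(x)+a_p^{\alpha,1}h\,_{-\infty}D_x^{(\alpha+1,\lambda)}u(x)+a_p^{\alpha,2}h^2\,_{-\infty}D_x^{(\alpha+2,\lambda)}u(x)+O(h^3).
\]
Forming the linear combination $\mu_{-1}\Delta_{-1}^{(\alpha,\lambda)}u+\mu_0\Delta_0^{(\alpha,\lambda)}u+\mu_1\Delta_1^{(\alpha,\lambda)}u$ and demanding that it equal $\,_{-\infty}D_x^{(\alpha,\lambda)}u+\tfrac{h^2}{6}\,_{-\infty}D_x^{(\alpha+2,\lambda)}u+O(h^3)$ produces the $3\times 3$ linear system
\[
\sum_{p=-1}^{1}\mu_p=1,\qquad \sum_{p=-1}^{1}\mu_p a_p^{\alpha,1}=0,\qquad \sum_{p=-1}^{1}\mu_p a_p^{\alpha,2}=\frac{1}{6},
\]
with $a_p^{\alpha,1}=p-\alpha/2$ and $a_p^{\alpha,2}=(\alpha+3\alpha^2-12\alpha p+12p^2)/24$ supplied by Lemma \ref{2.1}. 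A short direct substitution confirms that the triple in (\ref{coe}) is the unique solution, which establishes the left identity in (\ref{lrc}); the right identity is obtained verbatim by replacing $\Delta_p$ with $\Lambda_p$ and the left-infinity RL tempered operators with the right-infinity ones.

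The passage from (\ref{lrc}) to (\ref{th2.3})--(\ref{th2.4}) then reduces to rewriting $1+\tfrac{h^2}{6}\,_{-\infty}D_x^{(2,\lambda)}$ in compact form: since $\,_{-\infty}D_x^{(2,\lambda)}v(x)=e^{-\lambda x}\partial_x^2(e^{\lambda x}v(x))$ and $\delta_x^2=\partial_x^2+O(h^2)$, one obtains
\[
\left(1+\frac{h^2}{6}\,_{-\infty}D_x^{(2,\lambda)}\right)v(x)=\,_L\mathcal{B}v(x)+O(h^4),
\]
exactly as was worked out explicitly in the proof of Theorem \ref{th11}. Applying this identity to $v=\,_{-\infty}D_x^{(\alpha,\lambda)}u$ and combining it with (\ref{lrc}) yields (\ref{th2.3}); the right-handed counterpart follows identically with $\,_R\mathcal{B}$.

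The main obstacle is not the algebra itself, but ensuring that the Sobolev regularity $u\in W^{3+\alpha,1}(\mathbb{R})$ is strong enough to justify simultaneously the $O(h^3)$ remainder in the Gr\"unwald expansion and the $O(h^2)$ error in replacing $\partial_x^2$ by $\delta_x^2$ on $e^{\lambda x}\,_{-\infty}D_x^{(\alpha,\lambda)}u$. This hypothesis supplies precisely the $\alpha+3$ (fractional) derivatives required, and careful bookkeeping of these consumed derivatives, together with the uniformity of the constants hidden in Lemma \ref{2.1}, is the step I would be most cautious about when writing the full argument out.
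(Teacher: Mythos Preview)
Your proposal is correct and follows essentially the same approach as the paper: both apply Lemma \ref{2.1} with $n=3$ at the three shifts $p=-1,0,1$, impose the identical $3\times 3$ linear system $\sum\mu_p=1$, $\sum\mu_p a_p^{\alpha,1}=0$, $\sum\mu_p a_p^{\alpha,2}=\tfrac{1}{6}$ to obtain (\ref{coe}) and hence (\ref{lrc}), and then defer (\ref{th2.3})--(\ref{th2.4}) to the argument already given in Theorem \ref{th11}. Your added remarks on the semigroup identity and on the regularity bookkeeping are more explicit than what the paper writes, but they do not constitute a different route.
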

 \begin{proof}
Form Lemma \ref{2.1}, we know
\begin{equation}
\Delta_p^{(\alpha,\lambda)} u(x)=\,_{-\infty}D^{(\alpha,\lambda)}_x u(x)+ a_{p}^{\alpha, 1} \, _{-\infty}D_x^{(\alpha+1,\lambda)}u(x)h^1+ a_{p}^{\alpha, 2} \, _{-\infty}D_x^{(\alpha+2,\lambda)}u(x)h^2+O(h^3).
\end{equation}
By taking $p=1,0$ and $-1$, respectively, we have that
\begin{equation*}
\begin{array}{l}
\displaystyle
 \,_{L}D_{h}^{(\alpha,\lambda)}   u(x)
 =  \mu_{-1}\Delta_{-1}^{(\alpha,\lambda)} u(x)+ \mu_{0}\Delta_{0}^{(\alpha,\lambda)} u(x) + \mu_{1}\Delta_{1}^{(\alpha,\lambda)} u(x)\\\\
\displaystyle~~~~  ~~~~ ~~~~ ~~~~ ~~
=(\mu_{-1}+ \mu_{0}+\mu_{1})\,_{-\infty}D^{(\alpha,\lambda)}_x u(x)+  (\mu_{-1}a_{-1}^{\alpha, 1}+ \mu_{0}a_{0}^{\alpha, 1}+\mu_{1}a_{1}^{\alpha, 1}) \, _{-\infty}D_x^{(\alpha+1,\lambda)}u(x)h^1\\\\
\displaystyle~~~~  ~~~~ ~~~~ ~~~~ ~~~~ ~~ +(\mu_{-1}a_{-1}^{\alpha, 2}+ \mu_{0}a_{0}^{\alpha, 2}+\mu_{1}a_{1}^{\alpha, 2}) \, _{-\infty}D_x^{(\alpha+2,\lambda)}u(x)h^2+O(h^3).
\end{array}
\end{equation*}
In order to obtain the results (\ref{lrc}), the coefficients $\mu_{-1}$, $ \mu_{0}$ and $\mu_{1}$  need to solve
\begin{equation*}
\left\{
\begin{array}{lc}
\displaystyle   \mu_{-1}+ \mu_{0}+\mu_{1}=1, \\\\
\displaystyle   \mu_{-1}a_{-1}^{\alpha, 1}+ \mu_{0}a_{0}^{\alpha, 1}+\mu_{1}a_{1}^{\alpha, 1}=0,\\\\
\displaystyle  \mu_{-1}a_{-1}^{\alpha, 2}+ \mu_{0}a_{0}^{\alpha, 2}+\mu_{1}a_{1}^{\alpha, 2}=\frac{1}{6},
\end{array}
\right.
\end{equation*}
which implies (\ref{coe}). Thus (\ref{lrc}) hold. Similar to the proof of Theorem \ref{th11}, (\ref{th2.3}) and (\ref{th2.4}) can be easily got; here we skip them.
 \end{proof}

Now, let us denote
 \begin{equation} \label{apprll}
 \begin{array}{l}
\displaystyle
  \,_{L}D ^{ \alpha,\lambda }_h   u(x) =  \,_{L}D ^{ (\alpha,\lambda) }_h   u(x)  -\alpha\lambda^{\alpha-1} \,_{L}D ^{ (1,\lambda) }_h   u(x)+ \,_{L}\mathcal{B}\lambda^\alpha(\alpha-1) u(x)  \\\\
   \displaystyle~~~~ ~~~~ ~~~~ ~~~~
   =\frac{1}{h^\alpha} \sum\limits_{k=0}^{\infty} w_k^{(\alpha,\lambda)}u(x-(k-1)h) - \frac{ \alpha\lambda^{\alpha-1}}{2h } ( e^{\lambda h} u(x+ h )-e^{- \lambda h}u(x- h)) +\lambda^\alpha (\alpha-1 )\,_{L}\mathcal{B} u(x)
 \end{array}
 \end{equation}
and
 \begin{equation}\label{appr22}
  \begin{array}{l}
\displaystyle
 \,_{R}D ^{ \alpha,\lambda }_h u(x)= \,_{R}D ^{( \alpha,\lambda )}_h u(x)  -\alpha\lambda^{\alpha-1}\,_{R}D ^{(1,\lambda )}_h u(x) +\lambda^\alpha(\alpha-1)\,_{R}\mathcal{B} u(x)\\\\
   \displaystyle~~~~ ~~~~ ~~~~ ~~~~
    = \frac{1}{h^\alpha}\sum\limits_{k=0}^{\infty} w_k^{(\alpha,\lambda)}u(x+(k-1)h)-\frac{ \alpha\lambda^{\alpha-1}}{2h } ( e^{\lambda h} u(x- h )-e^{- \lambda h}u(x+h)) +\lambda^\alpha (\alpha-1 )\,_{R}\mathcal{B} u(x) .
 \end{array}
 \end{equation}
Then together with (\ref{lr11}), (\ref{lr22}), Theorem \ref{th11} and Theorem  \ref{th222}, we have the third order quasi-compact approximations for the $\alpha$-th order left and right RL tempered fractional derivatives $_{-\infty} D_x^{\alpha,\lambda}u(x)$ and $_x D_{\infty}^{\alpha,\lambda}u(x)$:
\begin{equation} \label{se333}
  \,_{L}\mathcal{B} \,_{-\infty}D^{ \alpha,\lambda }_x u(x) = \,_{L}D ^{ \alpha,\lambda }_h   u(x)+O(h^3)
\end{equation}
and
\begin{equation}\label{se344}
 \,_{R} \mathcal{B} \,_x D_{\infty}^{\alpha,\lambda}u(x) = \,_{R}D ^{ \alpha,\lambda }_h   u(x)+O(h^3).
\end{equation}
If $u(x)$ is defined on $[a,b]$, and $u\in W^{3+\alpha,1}(-\infty, b)$ after zero extension,
then $_a D_x^{\alpha,\lambda}u(x)$ has a third order approximation
\begin{equation}\label{se33}
   \begin{array}{l}
\displaystyle
  \,_{L}\tilde{D} ^{ \alpha,\lambda }_h   u(x)
  =\frac{1}{h^\alpha} \sum\limits_{k=0}^{[\frac{x-a}{h}]} w_k^{(\alpha,\lambda)}u(x-(k-1)h) - \frac{ \alpha\lambda^{\alpha-1}}{2h } ( e^{\lambda h} u(x+ h )-e^{- \lambda h}u(x- h))+\lambda^\alpha (\alpha-1 )\,_{L}\mathcal{B} u(x).
  \end{array}
 \end{equation}
And if $u(x)$ is defined on $[a,b]$, and $u\in W^{3+\alpha,1}(a, +\infty)$ after zero extension, then $_x D_b^{\alpha,\lambda}u(x)$ has a third order approximation
 \begin{equation}\label{se34}
   \begin{array}{l}
\displaystyle
 \,_{R}\tilde{D} ^{ \alpha,\lambda }_h u(x)
 = \frac{1}{h^\alpha}\sum\limits_{k=0}^{[\frac{b-x}{h}]} w_k^{(\alpha,\lambda)}u(x+(k-1)h)-\frac{ \alpha\lambda^{\alpha-1}}{2h } ( e^{\lambda h} u(x- h )-e^{- \lambda h}u(x+h)) +\lambda^\alpha (\alpha-1 )\,_{R}\mathcal{B} u(x) .
 \end{array}
 \end{equation}

\begin{property}\label{wpop}
The formulaes (\ref{lcom}) at the grid points $x_i=a+ih$ are denoted as
\begin{equation*}
\begin{array}{l}
\displaystyle
 \,_{L}D_{h}^{(\alpha,\lambda)}   u(x_i)
  =\frac{1}{h^\alpha}\sum_{k=0}^\infty w_k^{(\alpha,\lambda)}u(x_{i-k+1}),\\\\
\displaystyle \,_{R}D_{h}^{(\alpha,\lambda)} u(x)
=\frac{1}{h^\alpha}\sum_{k=0}^\infty w_k^{(\alpha,\lambda)}u(x_{i+k-1}),
\end{array}
\end{equation*}
where the weights are given as
\begin{equation*}
w_0^{(\alpha,\lambda)}=\mu_1g_0^{(\alpha)}e^{\lambda h},\quad
\, w_1^{(\alpha,\lambda)}=\mu_1g_1^{(\alpha)} +\mu_0g_0^{(\alpha)},\quad
 w_k^{(\alpha,\lambda)}=\left(\mu_1g_k^{(\alpha)}+\mu_0 g_{k-1}^{(\alpha)}+\mu_{-1}g_{k-2}^{(\alpha)}\right)e^{(1-k)\lambda h} ,\quad k\geq2,
\end{equation*}
and the coefficients have the following properties:
 \begin{equation}\label{weq}
 \left\{
\begin{array}{lc}
\displaystyle   w_0^{(\alpha,\lambda)}>0,\quad w_1^{(\alpha,\lambda)}\leq0 ,  \quad w_2^{(\alpha,\lambda)} = \frac{e^{-\lambda h}}{48}(8 - 50 \alpha + \alpha^2 + 14 \alpha^3 + 3 \alpha^4),
 \\\\
\displaystyle w_3^{(\alpha,\lambda)}=- \frac{e^{-2\lambda h}}{144}  \alpha (80 - 86 \alpha - 11 \alpha^2 + 14 \alpha^3 + 3 \alpha^4)  ,\quad w_k^{(\alpha,\lambda)}\geq 0,\, k\geq4,\\\\
\displaystyle    \sum_{k=0}^{+\infty}w_k^{(\alpha,\lambda)} = \left( \mu_1 e^{\lambda h}+   \mu_0 +
    \mu_{-1} e^{-\lambda h} \right)  \left( 1 - e^{-\lambda h}  \right)^\alpha,
\end{array}
\right.
 \end{equation}
 where $\mu_j$, $j=-1,0,1$, are given in (\ref{coe}).
\end{property}

\section{Quasi-compact schemes for the space tempered fractional diffusion equations}
Based on the third order quasi-compact discretizations to the space tempered fractional derivatives,
we develop the implicit schemes of the space tempered fractional diffusion equations with the left RL tempered fractional derivative and right RL tempered fractional derivative, respectively. And the detailed numerical stability and convergence analyses are provided.
\subsection{Quasi-compact scheme of the fractional diffusion equation with the left RL tempered fractional derivative}
Here we consider the following initial boundary value problem
\begin{equation}\label{tequ1}
\left\{
\begin{array}{lll}
\displaystyle \frac{\partial u(x,t)}{\partial t} =K  \, _aD_x^{\alpha,\lambda} u(x,t) +f(x,t),  &(x,t)\in (a,b)\times(0,T], \\\\
\displaystyle  u(x,0)=u_0(x),&x\in[a,b],\\\\
\displaystyle   u(a,t)=0,~~~ u(b,t)=u_b(t),  & t\in[0,T],
\end{array}
\right.
\end{equation}
where $1<\alpha<2$. The diffusion coefficient $K$ is a nonnegative constant.  
 Assume that (\ref{tequ1}) has an unique and sufficiently regular solution.

We take the uniform meshes with the time step size $\tau =T/N$ on the interval $[0,T]$ and the space step size $h=(b-a)/M$
 on the interval $[a,b]$, respectively. Then
 \[\left\{(x_i,t_n)\,|x_i=a+ih,\,\,  i=0,\cdots,M;\,\, t_n=n\tau,\,\, n=0,\cdots,N\right\},\]
where $N, M$ are two positive integers.
For convenience, denote
\[u_i^n=u(x_i,t_n),\quad f^{n+1}_i=f(x_i,t_{n+1 }),\quad \delta_tu^n_i=(u_i^{n+1}- u_i^n)/\tau, \quad 0\leq n\leq N-1.\]
Discretizing the time derivative in (\ref{tequ1}) yields
\begin{equation}\label{spacedis}
\delta_tu^n_i =  K (_aD_x^{\alpha,\lambda} u)_i^{n+1}
+f^{n+1 }_i+O(\tau ).
\end{equation}
Recalling  the third order quasi-compact discretizations (\ref{se33}), we act the invertible operator  $\tau \,_{L}\mathcal{B}$ on both sides of (\ref{spacedis}) and obtain
\begin{equation}\label{spacedissss}
\begin{array}{l }
\displaystyle
\,_{L}\mathcal{B} u_i^{n+1} - \,_{L}\mathcal{B} u_i^{n}
=   K \tau  \,_{L}\tilde{D}_{h}^{\alpha,\lambda} u_i^{n+1}
+ \tau\,_{L}\mathcal{B} f^{n+1 }_i  +\tau R^{n+1 }_i,
\end{array}
\end{equation}
where   \[|R^{n+1 }_i| \leq C_1(\tau +h^3). \]
Separating the time layers and replacing  $\,_{L}\tilde{D}_{h}^{\alpha,\lambda} u_i^{n+1}$ by (\ref{apprll}), we have
\begin{equation}
\,_{L}\mathcal{B} u_i^{n+1}
 - K \tau  \left(\frac{1}{ h^{\alpha }}\sum_{k=0}^{i+1}w_{k}^{(\alpha,\lambda)}u_{i-k+1}^{n+1} - \frac{ \alpha\lambda^{\alpha-1}}{2h } ( e^{\lambda h} u_{i+1}^{n+1} -e^{- \lambda h}u_{i-1}^{n+1})+\lambda^\alpha (\alpha-1 )\,_{L}\mathcal{B}  u_{i }^{n+1}\right) = \,_{L}\mathcal{B} u_i^{n} +\tau\,_{L}\mathcal{B} f_{i}^{n+1 } +\tau R^{n+1 }_i.
\end{equation}
Denoting $U_i^n$ as the numerical approximation of $u_i^n$, we obtain the  quasi-compact scheme for (\ref{tequ1}) as follows
\begin{equation} \label{matappl}
\begin{array}{l}
\displaystyle
\,_{L}\mathcal{B} U_i^{n+1}
 -  K \tau \left(\frac{1}{ h^{\alpha }}\sum_{k=0}^{i+1}w_{k}^{(\alpha,\lambda)}U_{i-k+1}^{n+1} - \frac{ \alpha\lambda^{\alpha-1}}{2h } ( e^{\lambda h} U_{i+1}^{n+1} -e^{- \lambda h}U_{i-1}^{n+1})+\lambda^\alpha (\alpha-1 )\,_{L}\mathcal{B} U_{i }^{n+1} \right)
 = \,_{L}\mathcal{B} U_i^{n}  +\tau \,_{L}\mathcal{B}f_{i}^{n+1 } .
\end{array}
\end{equation}
Then the corresponding matrix form of (\ref{matappl}) can be written as
\begin{equation}\label{tmatrixfourthl}
 ( B_l^{ \alpha } -P^{ \alpha }_l )U^{n+1}=   B^{ \alpha }_l U^{n}+\tau B_l^{ \alpha } F_l^{n+1} +H_l^{n+1},
 \end{equation}
 where  
 $
U^n=(U^n_1,U^n_2,\cdots ,U^n_{M-1})^T,\quad F_l^{n+1 }=(f^{n+1 }_1,f^{n+1 }_2,\cdots ,f^{n+1 }_{M-1})^T
$,

\begin{equation}\label{bdef}
\displaystyle B_l^{ \alpha } =\left(                 
  \begin{array}{ccccc}   
    \frac{2}{3}    &  \frac{1}{6} e^{\lambda h} &     &   &    \\  
   \frac{1}{6} e^{-\lambda h}  & \frac{2}{3} & \frac{1}{6} e^{\lambda h}   &    &     \\  
          &   \ddots  & \ddots &   \ddots &     \\  
          &      & \frac{1}{6} e^{-\lambda h}  & \frac{2}{3} & \frac{1}{6} e^{\lambda h}      \\  
          &    &   & \frac{1}{6} e^{-\lambda h}    &\frac{2}{3}   \\  
  \end{array}
\right),                 
\end{equation}
\begin{equation}\label{mp}
 \begin{array}{l}  
\displaystyle P^{ \alpha  }_l=  K \tau (A^{ \alpha  } - \alpha\lambda^{\alpha-1}C^{ \alpha  }+ \lambda^\alpha (\alpha-1 )B_l^{ \alpha }  )\\\\
\displaystyle~~~~  =  K \tau  \frac{1}{  h^{\alpha }} \left(                 
  \begin{array}{ccccc}   
    w_1^{(\alpha,\lambda)}    & w_0^{(\alpha,\lambda)}   &                 &   &    \\  
   w_2^{(\alpha,\lambda)}    & w_1^{(\alpha,\lambda)} &  w_0^{(\alpha,\lambda)}  &    &     \\  
       \vdots      &     w_2^{(\alpha,\lambda)}   &  w_1^{(\alpha,\lambda)}   &    & \\  
    w_{M-2}^{(\alpha,\lambda)} &  \cdots      &\ddots   & \ddots   & w_0^{(\alpha,\lambda)}    \\  
    w_{M-1}^{(\alpha,\lambda)}&w_{M-2}^{(\alpha,\lambda)} & \cdots  & w_2^{(\alpha,\lambda)}    & w_1^{(\alpha,\lambda)} \\  
  \end{array}
\right)                 
-\frac{  \alpha\lambda^{\alpha-1}}{2h } \left(                 
  \begin{array}{ccccc}   
      0 &     e^{\lambda h} &                 &   &    \\  
      -e^{-\lambda h}   &0 &  \  e^{\lambda h}  &    &     \\  
       &  \ddots    &\ddots   & \ddots   & \\  
              & &- e^{-\lambda h}  & 0  &    e^{\lambda h}\\  
     &  &    & -e^{-\lambda h}    & 0  \\  
  \end{array}
\right)\\\\
\displaystyle~~~~ ~~~~ +K \tau\lambda^\alpha (\alpha-1 )  B_l^{ \alpha },             
  \end{array}
\end{equation}
and
\begin{equation} \label{eq47}     
\begin{array}{l}
\displaystyle H_l^{n+1}=\left(                 
 \begin{array}{c} 
       \frac{1}{6} e^{-\lambda h}    \\  
       0   \\  
       \vdots        \\  
        0      \\  
  \end{array}
\right)
(U^n_0-U^{n+1}_0+\tau f^{n+1 }_0)+ K \tau \left(                 
  \begin{array}{c}   
\frac{1}{ h^{\alpha }}w_2^{(\alpha)} + \frac{  \alpha\lambda^{\alpha-1}}{2h }  e^{-\lambda h} +\frac{1}{6}\lambda^\alpha (\alpha-1 ) e^{-\lambda h} \\  
        w_3^{(\alpha)}   \\  
       \vdots        \\  
        w_{M-1}^{(\alpha) }    \\  
         w_{M}^{(\alpha) }   \\  
  \end{array}
\right) U^{n+1}_0  \\\\
   \displaystyle~~~~ ~~~~  ~~~~ +              
\left(                 
  \begin{array}{c}   
     0    \\  
       \vdots   \\  
       0        \\  
           \frac{1}{6} e^{ \lambda h}       \\  
  \end{array}
\right)
(U^n_M-U^{n+1}_M+\tau f^{n+1 }_M)+  K \tau \left(                 
  \begin{array}{c}   
   0  \\  
      \\  
       \vdots        \\  
     \\  
\frac{1}{ h^{\alpha }}w_ 0^{(\alpha)} - \frac{  \alpha\lambda^{\alpha-1}}{2h }  e^{ \lambda h} +\frac{1}{6}\lambda^\alpha (\alpha-1 ) e^{ \lambda h} \\  
  \end{array}
\right) U^{n+1}_M .
 \end{array}
\end{equation}

\begin{property}\label{ppro}
Let $\frac{ 1}{ K \tau}P^{ \alpha  }_l=\{P_{j,k}\}_{(M-1)\times(M-1)}$.
For $\alpha\in(1,2)$ and $0\leq\lambda h\leq1$, the elements of $\{P_{j,k}\}_{(M-1)\times(M-1)}$ have the following properties:
\begin{enumerate}
  \item $P_{j,j}= \frac{1}{  h^{\alpha }} w_1^{(\alpha,\lambda)}+\frac{2}{3}\lambda^\alpha (\alpha-1 )<0,\quad j=1,\cdots,M-1; $
  \item $P_{j,j+1}= \frac{1}{  h^{\alpha }} w_0^{(\alpha,\lambda)}-\frac{  \alpha\lambda^{\alpha-1}}{2h }  e^{\lambda h} +\frac{ e^{\lambda h} }{6}\lambda^\alpha (\alpha-1 ) ,\quad j=1,\cdots,M-2;$

      $P_{j+1,j}= \frac{1}{  h^{\alpha }} w_2^{(\alpha,\lambda)}+\frac{  \alpha\lambda^{\alpha-1}}{2h }  e^{-\lambda h} +\frac{ e^{-\lambda h} }{6}\lambda^\alpha (\alpha-1 ),\quad j=2,\cdots,M-1;$

      $P_{j,j+1}+P_{k+1,k}>0,\quad j=1,\cdots,M-2 ,k=1,\cdots,M-2;$
  \item  $ P_{j+n,j}=\frac{1}{  h^{\alpha }} w_{n+1}^{(\alpha,\lambda)},\quad n=2,\cdots,M-2,j=n+1,\cdots,M-1;$
  \item 
   $ \sum\limits_{k=0}^{+\infty}w_k^{(\alpha,\lambda)} -\frac{  \alpha(\lambda h)^{\alpha-1}}{2  }(-e^{-\lambda h}+ e^{\lambda h} )
+(\lambda h)^\alpha (\alpha-1 ) (\frac{2}{3}+(e^{-\lambda h}+ e^{\lambda h})/6) \leq0$.

\end{enumerate}
\end{property}

\subsection {Quasi-compact scheme of the fractional diffusion equation with the right RL tempered fractional derivative}
We further consider the initial boundary value problem with the right RL tempered fractional derivative
\begin{equation}\label{tequ2}
\left\{
\begin{array}{lll}
\displaystyle \frac{\partial u(x,t)}{\partial t} = K  \, _xD_b^{\alpha,\lambda} u(x,t) +f(x,t),  &(x,t)\in (a,b)\times(0,T], \\\\
\displaystyle  u(x,0)=u_0(x),&x\in[a,b],\\\\
\displaystyle u(a,t)=u_a(t),~~~  u(b,t)=0,  & t\in[0,T],
\end{array}
\right.
\end{equation}
where $1<\alpha<2$.    
 Assume that the solution of (\ref{tequ2}) is unique and sufficiently regular to guarantee the feasibility of achieving the finite difference scheme and establishing its accuracy.

Recalling the third order quasi-compact discretization (\ref{se34}), we obtain
\begin{equation}\label{spacedissss22}
\begin{array}{l }
\displaystyle
\,_{R}\mathcal{B} u_i^{n+1} -
  K \tau  \,_{R}\tilde{D}_{h}^{\alpha,\lambda} u_i^{n+1}
=\,_{R}\mathcal{B} u_i^{n}
+ \,_{R}\mathcal{B} \tau f^{n+1 }_i  +\tau R^{n+1 }_i,
\end{array}
\end{equation}
where   \[|R^{n+1}_i|\leq C_2(\tau +h^3). \]
Denoting $U_i^n$ as the numerical approximation of $u_i^n$ and replacing  $\,_{R}\tilde{D}_{h}^{\alpha,\lambda} u_i^{n+1}$ by (\ref{appr22}), we obtain the quasi-compact scheme of (\ref{tequ2}) as
\begin{equation} \label{matapp22}
\,_{R}\mathcal{B} U_i^{n+1}
 -  K \tau \left(\frac{1}{ h^{\alpha }}\sum_{k=0}^{M-i+1}w_{k}^{(\alpha,\lambda)}U_{i+k-1}^{n+1} - \frac{ \alpha\lambda^{\alpha-1}}{2h } ( e^{\lambda h} U_{i-1}^{n+1} -e^{- \lambda h}U_{i+1}^{n+1})+\lambda^\alpha (\alpha-1 )\,_{R}\mathcal{B} U_{i }^{n+1}\right) = \,_{R}\mathcal{B} U_i^{n} +\,_{R}\mathcal{B}\tau f_{i}^{n+1}.
\end{equation}
The corresponding matrix form of (\ref{matapp22}) can be written as
\begin{equation}\label{matrixfourthr}
  ( B_r^{ \alpha } -P^{ \alpha }_r )U^{n+1}=  B _r^{ \alpha } U^{n}+\tau B _r^{ \alpha }F_r^{n+1}+H_r^{n+1},
 \end{equation}
 where  $B_r^{ \alpha } =(B_l^{ \alpha } )^T$, $P^{ \alpha }_r=( P^{ \alpha }_l)^T$, $H^{n+1}_r$=flipud$(H^{n+1}_l)$; $B_l^{ \alpha }$ is defined in (\ref{bdef}), $P_l^\alpha$ in (\ref{mp}), and $H_l^{n+1}$ in (\ref{eq47}). We further discuss the properties of $P_l^\alpha$.

\begin{lemma}[\cite{bhatia2009positive}]\label{syy}
Let $A\in \mathbb{R}^{n\times n}$.   And it satisfies $v^T Av \geq0$ for all real nonzero vectors $v$, if and only if its symmetric part $H=\frac{A+A^T}{2}$ is positive definite.
\end{lemma}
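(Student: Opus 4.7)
The plan is to reduce the statement to the standard characterization of positive (semi)definite symmetric matrices via their quadratic form, by exploiting the fact that the map $v \mapsto v^T A v$ depends only on the symmetric part of $A$.

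First I would observe that for any $v\in\mathbb{R}^n$ the quantity $v^T A v$ is a $1\times 1$ matrix, hence equal to its own transpose: $(v^T A v)^T = v^T A^T v$. Averaging these two equal scalars gives
\[
v^T A v \;=\; \tfrac{1}{2}\bigl(v^T A v + v^T A^T v\bigr) \;=\; v^T \left(\tfrac{A+A^T}{2}\right) v \;=\; v^T H v,
\]
so the condition $v^T A v \geq 0$ for every $v$ is equivalent to $v^T H v \geq 0$ for every $v$. An equivalent way to phrase the same identity is that the antisymmetric part $(A-A^T)/2$ contributes nothing to the quadratic form, since $v^T(A-A^T)v = v^T A v - v^T A^T v = 0$.

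Next, since $H=(A+A^T)/2$ is symmetric by construction, the inequality $v^T H v \geq 0$ for all $v\in\mathbb{R}^n$ is exactly the defining property of $H$ being positive (semi)definite (equivalently, via the spectral theorem, all eigenvalues of $H$ are nonnegative). This yields both implications simultaneously, so the two conditions in the lemma are equivalent.

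The argument is essentially a one-line identity, so there is no genuine obstacle; the only point requiring care is terminological, namely that the non-strict inequality $v^T A v \geq 0$ in the hypothesis corresponds to the \emph{semi}-definite sense of ``positive definite'' as used in the statement. In the intended downstream application, $A$ will be the matrix $P_l^{\alpha}$ (or a related block), and this lemma will reduce the verification of the numerical stability condition $v^T A v \geq 0$ to checking positive (semi)definiteness of the symmetric matrix $(A+A^T)/2$, which in turn can be tackled through the generating function of its Toeplitz part together with Weyl's theorem.
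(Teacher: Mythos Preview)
The paper does not prove this lemma; it is quoted from \cite{bhatia2009positive} as a standard fact, so there is no in-paper argument to compare against. Your derivation of the identity $v^T A v = v^T H v$ via $(v^T A v)^T = v^T A^T v$ is correct and is exactly the standard justification, and you rightly flag that the non-strict inequality $v^T A v \geq 0$ literally matches positive \emph{semi}definiteness of $H$; in the paper's actual applications (Theorem~\ref{pdefi}, Lemma~\ref{dobb}) the strict version is what is used, so the terminology mismatch in the lemma statement is harmless.
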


\begin{lemma}[\cite{chan2007introduction}]\label{sy111}
 Let $H$ be a Toeplitz matrix with a generating function $f\in \mathbb{C}_{2\pi}$. Let $ \varrho_{ \min}(H)$ and $ \varrho_{ \max}(H)$  denote the smallest and
largest eigenvalues of  $H$, respectively. Then we have
\[ f_{ \min}\leq\varrho_{ \min}(H)\leq\varrho_{ \max}(H)\leq f_{ \max},\]
where $f_{ \min}$ and $f_{ \max} $ denote the minimum and maximum values of $f(x)$, respectively.
In particular, if $f_{ \max}\leq0$ and $f_{\min} \neq f_{ \max}$, then $H $ is negative definite.
\end{lemma}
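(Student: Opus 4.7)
The plan is to prove the eigenvalue bounds via a Rayleigh--Ritz argument combined with a Fourier representation of the Toeplitz quadratic form. Write $H=(h_{j-k})_{j,k=0}^{n-1}$ where $f(\theta)=\sum_{m\in\mathbb{Z}}h_m e^{im\theta}$ is the (real-valued, since $H$ is Hermitian) generating function, and to each nonzero $v=(v_0,\dots,v_{n-1})^{T}$ associate the trigonometric polynomial $p(\theta)=\sum_{k=0}^{n-1}v_k e^{ik\theta}$. A direct manipulation---insert the Fourier series for $h_{j-k}$ into the double sum $\sum_{j,k}\bar v_j h_{j-k} v_k$ and interchange summation and integration---yields the pair of identities
\[
v^{*}Hv=\frac{1}{2\pi}\int_{-\pi}^{\pi}f(\theta)|p(\theta)|^{2}\,d\theta,\qquad v^{*}v=\frac{1}{2\pi}\int_{-\pi}^{\pi}|p(\theta)|^{2}\,d\theta,
\]
the second being Parseval's identity.

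From the pointwise bound $f_{\min}|p|^{2}\le f|p|^{2}\le f_{\max}|p|^{2}$ I would integrate over $[-\pi,\pi]$ to obtain $f_{\min}\,v^{*}v\le v^{*}Hv\le f_{\max}\,v^{*}v$. Dividing by $v^{*}v$ and invoking the Rayleigh--Ritz characterizations $\varrho_{\min}(H)=\min_{v\ne 0}v^{*}Hv/v^{*}v$ and $\varrho_{\max}(H)=\max_{v\ne 0}v^{*}Hv/v^{*}v$ immediately produces the chain $f_{\min}\le\varrho_{\min}(H)\le\varrho_{\max}(H)\le f_{\max}$.

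For the strict negative-definiteness claim, I need $v^{*}Hv<0$ for every $v\neq 0$. The key observation is that the nonzero trigonometric polynomial $p(\theta)$ has degree at most $n-1$, hence vanishes at only finitely many points in $[-\pi,\pi]$, so $|p(\theta)|^{2}>0$ almost everywhere. Because $f\in\mathbb{C}_{2\pi}$ is continuous and $f_{\min}\ne f_{\max}$, the sublevel set $E=\{\theta:f(\theta)<f_{\max}\}$ is open and nonempty, hence of positive Lebesgue measure. Intersecting $E$ with the full-measure set $\{|p|^{2}>0\}$ leaves a set of positive measure on which $f(\theta)|p(\theta)|^{2}<f_{\max}|p(\theta)|^{2}$ strictly, so integration gives $v^{*}Hv<f_{\max}\,v^{*}v\le 0$.

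The main obstacle is not the eigenvalue bounds themselves, which are a one-line consequence of the Fourier identity, but rather making the strict inequality rigorous: one must combine the continuity of $f$ (to pass from the pointwise inequality $f(\theta)<f_{\max}$ on a single point to a set of positive measure) with the elementary fact that a nontrivial trigonometric polynomial has only finitely many zeros. Once these two measure-theoretic observations are in place, the strict inequality and hence the negative-definiteness follow without further work.
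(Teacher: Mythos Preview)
Your argument is correct and is precisely the standard proof of this classical result: represent the Toeplitz quadratic form via the Fourier integral $v^{*}Hv=\frac{1}{2\pi}\int f|p|^{2}$, apply Rayleigh--Ritz for the eigenvalue bounds, and use continuity of $f$ together with the finiteness of zeros of a nontrivial trigonometric polynomial to upgrade to a strict inequality when $f$ is nonconstant.

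There is nothing to compare against: the paper does not prove this lemma at all. It is stated with a citation to Chan and Jin's monograph \cite{chan2007introduction} and used as a black box in the proof of Theorem~\ref{pdefi}. Your proof is essentially the one found in that reference, so you have supplied exactly what the paper omits.
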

\begin{lemma}[Weyl's Theorem \cite{chan2007introduction}]\label{weylt}
 Let $A,E\in \mathbb{C} ^{n\times n}$ be Hermitian and the
eigenvalues $\varrho_j(A)$, $\varrho_j(E)$, $\varrho_j(A+E)$ be arranged in an increasing order. Then for
each $k=1,2,\cdots,n$, we have
\[  \varrho_k(A)+\varrho_1(E)\leq\varrho_k(A+E)\leq \varrho_k(A)+\varrho_n(E).\]
\end{lemma}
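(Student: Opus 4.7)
The plan is to deduce both inequalities from the Courant-Fischer min-max characterization of eigenvalues of a Hermitian matrix. With the eigenvalues arranged in increasing order, the version I would use reads
\[
\varrho_k(M) = \max_{\substack{S \subseteq \mathbb{C}^n \\ \dim S = n-k+1}} \;\min_{\substack{x \in S \\ \|x\|=1}} x^* M x,
\]
and I would apply it directly to $M = A + E$. The key observation is that, since $E$ is Hermitian with smallest eigenvalue $\varrho_1(E)$ and largest eigenvalue $\varrho_n(E)$, the Rayleigh quotient obeys the pointwise bounds $\varrho_1(E) \le x^* E x \le \varrho_n(E)$ for every unit vector $x$.

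For the upper bound, I would use the decomposition $x^*(A+E)x = x^* A x + x^* E x$ together with $x^* E x \le \varrho_n(E)$ to conclude that, for each subspace $S$ of dimension $n-k+1$,
\[
\min_{x \in S,\,\|x\|=1} x^*(A+E)x \;\le\; \varrho_n(E) + \min_{x \in S,\,\|x\|=1} x^* A x.
\]
Taking the maximum over all such $S$ and invoking Courant-Fischer on both sides yields $\varrho_k(A+E) \le \varrho_k(A) + \varrho_n(E)$. The lower bound is the symmetric argument: inserting $x^* E x \ge \varrho_1(E)$ in the inner minimum and then taking the max over $S$ gives $\varrho_k(A+E) \ge \varrho_k(A) + \varrho_1(E)$.

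The main obstacle is conceptual bookkeeping rather than analysis: one must fix the convention on eigenvalue ordering (here $\varrho_1$ is the smallest, $\varrho_n$ the largest) and pick the matching form of the Courant-Fischer formula, because pairing the wrong extremum of $E$ with the wrong variational formula produces vacuous or incorrect bounds. Once the convention is pinned down, the proof is essentially a one-step insertion of the Rayleigh bound into the variational expression, and attainment of $\varrho_1(E)$ and $\varrho_n(E)$ at eigenvectors of $E$ confirms that no slack has been introduced in the reduction.
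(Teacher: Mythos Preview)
Your proof via the Courant--Fischer min--max principle is correct and is in fact the standard argument for Weyl's theorem. Note, however, that the paper does not supply its own proof of this lemma: it is stated as a cited result from \cite{chan2007introduction} and used as a black box in the proof of Theorem~\ref{pdefi}, so there is no in-paper argument to compare against.
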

 \begin{theorem}\label{pdefi}
 When $1<\alpha< 2$ and $\lambda h\leq1$, the matrixes  $ P^{ \alpha }_l$ and $P^{ \alpha }_r$ satisfy $v^T P^{ \alpha }_lv <0$ and $v^T P^{ \alpha }_rv <0$, respectively, for all real nonzero vectors $v$.
 \end{theorem}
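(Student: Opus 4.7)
The strategy is to reduce the claim to the negative-definiteness of a symmetric Toeplitz matrix and then to extract the required sign information via the generating-function criterion of Lemma \ref{sy111}, aided by Weyl's theorem (Lemma \ref{weylt}) when useful. By Lemma \ref{syy}, the inequality $v^T P v < 0$ for every real nonzero $v$ is equivalent to the symmetric part $S(P) := (P + P^T)/2$ being negative definite. Because $P_r^\alpha = (P_l^\alpha)^T$ by construction, $S(P_r^\alpha) = S(P_l^\alpha)$, so it suffices to prove negative definiteness of $H := S(P_l^\alpha)$. From (\ref{mp}) one has
\begin{equation*}
\frac{H}{K\tau} = S(A^\alpha) - \alpha\lambda^{\alpha-1}\, S(C^\alpha) + \lambda^\alpha(\alpha-1)\, S(B_l^\alpha),
\end{equation*}
which is a symmetric Toeplitz matrix because each summand is. By Lemma \ref{sy111} it then suffices to prove that the generating function $g(\theta)$ of $H/(K\tau)$ satisfies $g_{\max} \leq 0$ and $g_{\min} \neq g_{\max}$.

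Next I would compute $g$ explicitly. From Property \ref{wpop} and the identity $\sum_{k\geq 0} g_k^{(\alpha)} z^k = (1-z)^\alpha$, a direct manipulation gives the closed form $\sum_{k=0}^\infty w_k^{(\alpha,\lambda)} z^k = e^{\lambda h}(1 - ze^{-\lambda h})^\alpha(\mu_1 + \mu_0 z e^{-\lambda h} + \mu_{-1} z^2 e^{-2\lambda h})$. Consequently the $S(A^\alpha)$ contribution is $h^{-\alpha}\operatorname{Re}\bigl[e^{-i\theta}\sum_{k\geq 0} w_k^{(\alpha,\lambda)}e^{ik\theta}\bigr]$, while the remaining two summands produce the elementary cosines $-\alpha\lambda^{\alpha-1}h^{-1}\sinh(\lambda h)\cos\theta$ and $\lambda^\alpha(\alpha-1)\bigl(\tfrac{2}{3} + \tfrac{\cosh(\lambda h)}{3}\cos\theta\bigr)$. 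Evaluating at $\theta = 0$ yields exactly the expression in Property \ref{ppro}(4), whence $g(0) \leq 0$. Non-constancy of $g$ is immediate from the nontrivial dependence of the fractional symbol on $\theta$.

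The remaining step is the global bound $g_{\max} \leq 0$. I would approach it by one of two routes. The direct route is to show $g(\theta) \leq g(0)$ for all $\theta \in [-\pi,\pi]$ by comparing the cosine expansions of the three pieces, using the sign pattern of the weights $w_k^{(\alpha,\lambda)}$ from Property \ref{wpop} (with $w_0 > 0$, $w_1 \leq 0$, explicit sign-checked $w_2$, $w_3$, and $w_k \geq 0$ for $k \geq 4$) together with Property \ref{ppro}(4) and $\lambda h \leq 1$. The alternative is to apply Weyl's theorem after splitting $H = H_0 + E$, where $H_0$ is the principal Toeplitz piece from $S(A^\alpha)$ — whose symbol is strictly negative away from $\theta = 0$ in the $\lambda = 0$ CWSGD analysis — and $E$ collects the bounded tempering corrections of size $O(\lambda^{\alpha-1}/h) + O(\lambda^\alpha)$; under $\lambda h \leq 1$ one checks $\varrho_n(E) < -\varrho_n(H_0)$, so that $\varrho_n(H) \leq \varrho_n(H_0) + \varrho_n(E) < 0$.

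The main obstacle, whichever route is taken, is controlling the non-singular tempered symbol $(1 - e^{i\theta - \lambda h})^\alpha$. Since it is smooth on $[-\pi,\pi]$ rather than singular at the origin, the familiar ``peak-at-zero'' behaviour known from the $\lambda = 0$ case has to be re-established by hand after coupling with the quadratic correction $\mu_1 + \mu_0 e^{i\theta - \lambda h} + \mu_{-1} e^{2(i\theta - \lambda h)}$ (whose coefficients depend on $\alpha$ via (\ref{coe})) and the tempering cosines. I expect the hypothesis $\lambda h \leq 1$ to be fully consumed here in bounding $\sinh(\lambda h)$ and $\cosh(\lambda h)$ uniformly against the dominant $h^{-\alpha}$ scaling of $S(A^\alpha)$, and all four items of Property \ref{ppro} to enter the comparison in order to pin down the sign of $g(\theta) - g(0)$.
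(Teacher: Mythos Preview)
Your opening reduction---passing to the symmetric part via Lemma~\ref{syy} and noting that $S(P_r^\alpha)=S(P_l^\alpha)$---matches the paper. After that, however, the two arguments diverge substantially.

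The paper does \emph{not} attempt to analyse the generating function of $H=\tfrac{1}{K\tau}S(P_l^\alpha)$ directly. Instead it reads off the signs of the individual entries $h_{j,k}$ from Properties~\ref{wpop} and~\ref{ppro}: the diagonal is negative, and every off-diagonal entry is nonnegative \emph{except possibly} $h_{j,j\pm 2}=\tfrac{1}{2h^\alpha}w_3^{(\alpha,\lambda)}$, whose sign flips at a computable threshold $\alpha_2\in(1,2)$. For $\alpha\in(1,\alpha_2]$ one has $h_{j,j\pm2}\ge 0$, so $H$ is strictly diagonally dominant with negative diagonal and Gerschgorin finishes. For $\alpha\in(\alpha_2,2)$ the paper builds a small \emph{pentadiagonal} positive-definite Toeplitz matrix $H^+$ (with $h_c^+=-h_{j,j+2}>0$, $h_a^+=6h_c^+$, $h_b^+=-4h_c^+$; positivity checked by its generating function) chosen so that $H+H^+$ is again diagonally dominant with negative diagonal. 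Weyl's theorem with the split $H=(H+H^+)+(-H^+)$ then gives $\varrho(H)<0$. The generating-function lemma is used only for the auxiliary $H^+$, where the symbol is a simple quadratic in $\cos\theta$.

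Your route---proving $g_{\max}\le 0$ for the full symbol of $H$---is not wrong in principle, but the proposal does not actually close it. The ``direct route'' of comparing cosine expansions runs into precisely the sign change of $w_3^{(\alpha,\lambda)}$ (and the indeterminate sign of $w_2^{(\alpha,\lambda)}$) that forces the paper to split into cases; a monotone comparison $g(\theta)\le g(0)$ is not available from the sign pattern alone. Your ``alternative route'' splits $H=H_0+E$ with $H_0$ the $\lambda=0$ part, but the needed inequality $\varrho_n(E)<-\varrho_n(H_0)$ is not established (and is delicate, since $\varrho_n(H_0)\to 0$ as $M\to\infty$ while $E$ carries terms of order $\lambda^{\alpha-1}/h$). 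By contrast, the paper's decomposition $H=(H+H^+)+(-H^+)$ is tailored so that \emph{both} pieces are manifestly negative definite, which is what makes the Weyl step go through cleanly.
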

\begin{proof}
By Lemma \ref{syy}, we just need to prove that their symmetric part  $\frac{P^{ \alpha }_l+(P^{ \alpha }_l)^T}{2}$ is strictly negative  definite.
Define a symmetry matrix $H=\frac{ 1}{K\tau}\frac{P^{ \alpha }_l+(P^{ \alpha }_l)^T}{2}=\{h_{j,k}\}$.

 Next, we discuss the sign of the elements of matrix $H$.
 From Property \ref{ppro}, we know that the elements in the main diagonal of matrix $H$ are  negative, i.e., $$h_{j,j}<0;$$
 except $h_{j,j+2}$, $h_{j+2,j}$ and $h_{j,j}$, all the other elements of matrix $H$ are nonnegative, i.e., $$h_{j,k}\geq0,\,k\neq j-2,j,j+2;$$
 together with Property \ref{wpop}, $h_{j,j+2}=h_{j+2,j}=\frac{1}{  2h^{\alpha }} w_{3}^{(\alpha,\lambda)}=- \frac{e^{-2\lambda h}}{288h^{\alpha }}  \alpha (80 - 86 \alpha - 11 \alpha^2 + 14 \alpha^3 + 3 \alpha^4) ,\,j=1,\cdots,M-2 $.
  Denote $g(\alpha)=- \frac{1}{288 } (80 - 86 \alpha - 11 \alpha^2 + 14 \alpha^3 + 3 \alpha^4)$.
  We can check that $g(\alpha)=0$ have two simple roots: $\alpha_1=1$ and
  $\alpha_2=\frac{1}{9}(-17 + (6184 -  \sqrt{311901})^{\frac{1}{3}} + (6184 + \sqrt{311901})^{\frac{1}{3}})$.
    Because $g(2)= -\frac{1}{6}<0$, $g(\alpha)\geq 0$ for $\alpha\in(1,\alpha_2]$ and $g(\alpha)< 0$ for $\alpha\in(\alpha_2, 2)$. Then $h_{j,j+2}\geq 0$ for $\alpha\in(1,\alpha_2]$ and $h_{j,j+2}< 0$ for $\alpha\in(\alpha_2, 2)$.
    Now we prove that $H$ is strictly negative definite in both of the two cases.

  When $\alpha\in(1,\alpha_2]$, $$h_{j,j+2}=h_{j+2,j}\geq0,\,j=1,\cdots,M-2.$$
  Then matrix $H$ is a strictly diagonally dominant matrix.
  Combining with the Gerschgorin disk theorem and Property \ref{wpop}, we know that the eigenvalues of matrix $H$ are all negative. So $H$ is strictly negative definite.

 When $\alpha\in(\alpha_2, 2)$, $$h_{j,j+2}=h_{j+2,j}<0,\,j=1,\cdots,M-2.$$
 Let us construct a new symmetric Toeplitz matrix $ H^{+}\in\mathbb{R}^{(M-1)\times (M-1)}$,
\begin{equation*}
 \begin{array}{l}  
\displaystyle  H^{+}= \left(                 
  \begin{array}{ccccccc}   
    h_{a}^{+}   & h_{b}^{+}  &    h_{c}^{+} &   &  & & \\  
   h_{b}^{+}    & h_{a}^{+}  &  h_{b}^{+}   &   h_{c}^{+} &  & &  \\  
      h_{c}^{+} & h_{b}^{+}  & h_{a}^{+}    &  h_{b}^{+}  &   h_{c}^{+}  &  & \\  
      &  \ddots & \ddots     &    \ddots    &   \ddots &  \ddots& \\
      &         &        h_{c}^{+}    &  h_{b}^{+} &  h_{a}^{+} &  h_{b}^{+}& h_{c}^{+} \\
      &         &            &   h_{c}^{+}  &  h_{b}^{+}   &  h_{a}^{+} & h_{b}^{+} \\
      &         &            &              & h_{c}^{+}  &  h_{b}^{+}   &  h_{a}^{+} \\
  \end{array}
\right)
  \end{array}
\end{equation*}
with
\[h_{c}^{+}=-h_{j,j+2}>0 ,\,h_{a}^{+} + 2h_{b}^{+} +  2h_{c}^{+}=0,\]
which should be positive definite and can make $ H^{+}+H$ strictly negative  definite.
In order to make $ H^{+}$ be positive definite, we need
the generation function of $ H^{+}$ to be positive  for any $x\in[-\pi,\pi]$, i.e.,
\[f_{H^{+}}(x)
=
h_{a}^{+}-2 h_{c}^{+} +  2h_{b}^{+} \cos(x)+   4h_{c}^{+}\cos(x)^2\geq0 . \]
Let $y=\cos(x)$. Then the above equation can be rewritten as a quadratic function
\begin{equation}\label{dfdgdg}
  f^{+}(y)=h_{a}^{+}-2 h_{c}^{+} +  2h_{b}^{+} y+   4h_{c}^{+}y^2,
\end{equation}
where  $y\in[-1,1]$, $h_{c}^{+}>0$. 
It's easy to check that the discriminant
$\Delta= (2h_{b}^{+})^2-4(h_{a}^{+}-2 h_{c}^{+}) (4h_{c}^{+})=(h_{a}^{+}+2h_{c}^{+})^2-4(h_{a}^{+}-2 h_{c}^{+}) (4h_{c}^{+})=(h_{a}^{+}-6h_{c}^{+})^2$
 and $f^{+}(1)=0$.
 Then when $\Delta=0$, i.e., $h_{a}^{+}=6h_{c}^{+}$, the function (\ref{dfdgdg}) is nonnegative.
From $h_{a}^{+} + 2h_{b}^{+} +  2h_{c}^{+}=0$, we get
$h_{b}^{+}=-4h_{c}^{+}$.
So, after taking $h_{a}^{+}=6h_{c}^{+}$ and $h_{b}^{+}=-4h_{c}^{+}$, $H^+$ is positive definite.

By some simple calculations, it can be shown that all the elements on the main diagonal of $H^{+}+H$ are negative, and the others  are nonnegative; and $ H^{+}+H$ is a strictly diagonally dominant matrix.
 Combining with the Gerschgorin disk theorem,  the eigenvalues of matrix $ H^{+}+H$ are all negative.
Since $ H^{+}$ is positive definite, $ -H^{+}$ is negative definite. 
As $ H=(H^{+}+H)+(-H^{+})$, together with the Weyl Theorem, the eigenvalues of  $ H$ satisfy
 \[\varrho(H)\leq\max\{\varrho(H^{+}+H)  \}+\max\{\varrho(-H^{+})\}<0.\]
Then the Toeplitz matrix $H$ is strictly negative definite.

From the above, when $\alpha\in(1, 2)$, the matrix $H$ is strictly negative definite, which means $\frac{P^{ \alpha }_l+(P^{ \alpha }_l)^T}{2}$ is also strictly negative definite.
By Lemma \ref{syy}, the matrix  $ P^{ \alpha }_l$   satisfies $v^T P^{ \alpha }_lv <0$   for all real nonzero vectors $v$.
 Since $P^{ \alpha }_r=( P^{ \alpha }_l)^T$, the matrix  $ P^{ \alpha }_r$    satisfies $v^T P^{ \alpha }_rv <0$   for all real nonzero vectors $v$.

\end{proof}

\subsection{Stability and convergence analysis}

 In this subsection, we focus  on the stability and convergence of the numerical schemes   and get that the   schemes   have third  order accuracy in space. 
Define
\[V_h=\left\{u:u=\{u_i\}\, {\rm \, is \, a \, grid \, function \, in}\,\{x=ih\}_{i=1}^{M-1}\,{\rm \, and }\, u_0=u_M=0\right\}.\]
For any $u=\{u_i\}\in V_h$, we use the discrete $L^2$ norm as 
\[\|u\|^2= h\sum_{i=1}^{M-1}u_i^2.\]
Next we probe into some properties  of matrix $B_l^{ \alpha } $.
\begin{lemma}\label{dobb}
Let $B_l^{ \alpha } $  be defined in (\ref{bdef}). Then, for any $h\leq1/\lambda $, $B_l^{ \alpha } $ satisfies that
\begin{equation}\label{mmmx}
   v^T B_l^{ \alpha } v >\frac{1}{12}v^Tv ,
\end{equation}
and
\begin{equation}\label{mmmx2}
   v^T B_l^{ \alpha } v <2v^Tv ,
\end{equation}
for all real nonzero vectors $v$.
\end{lemma}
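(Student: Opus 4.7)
The plan is to reduce both bounds to an eigenvalue estimate on the symmetric part of $B_l^\alpha$, and then invoke the generating-function result (Lemma \ref{sy111}) on that symmetric part, which is a very clean $3$-band Toeplitz matrix.

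First I would observe that for a real vector $v$, the skew-symmetric part of $B_l^\alpha$ contributes nothing: since $v^T B_l^\alpha v = (v^T B_l^\alpha v)^T = v^T (B_l^\alpha)^T v$, we have
\begin{equation*}
v^T B_l^\alpha v \;=\; v^T H v, \qquad H:=\tfrac{1}{2}\bigl(B_l^\alpha+(B_l^\alpha)^T\bigr).
\end{equation*}
From the definition (\ref{bdef}), $H$ is a symmetric tridiagonal Toeplitz matrix with main-diagonal entry $2/3$ and off-diagonal entry $\tfrac{1}{12}(e^{\lambda h}+e^{-\lambda h})=\tfrac{1}{6}\cosh(\lambda h)$; all other entries vanish. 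Its generating function is therefore
\begin{equation*}
f(\theta)=\tfrac{2}{3}+\tfrac{1}{3}\cosh(\lambda h)\cos\theta,\qquad \theta\in[-\pi,\pi],
\end{equation*}
so that $f_{\max}=\tfrac{2+\cosh(\lambda h)}{3}$ and $f_{\min}=\tfrac{2-\cosh(\lambda h)}{3}$.

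Next I would use the hypothesis $h\leq 1/\lambda$, i.e.\ $\lambda h\leq 1$, which gives $\cosh(\lambda h)\leq\cosh 1=\tfrac{1}{2}(e+e^{-1})<\tfrac{7}{4}$. This yields strict inequalities
\begin{equation*}
f_{\min}\;>\;\tfrac{2-7/4}{3}\;=\;\tfrac{1}{12},\qquad f_{\max}\;<\;\tfrac{2+7/4}{3}\;=\;\tfrac{5}{4}\;<\;2.
\end{equation*}
Applying Lemma \ref{sy111} to $H$ gives $f_{\min}\leq \varrho_{\min}(H)\leq \varrho_{\max}(H)\leq f_{\max}$, so for every nonzero real $v$,
\begin{equation*}
\tfrac{1}{12}\,v^Tv\;<\;f_{\min}\,v^Tv\;\leq\;v^THv\;\leq\;f_{\max}\,v^Tv\;<\;2\,v^Tv,
\end{equation*}
which is exactly (\ref{mmmx})--(\ref{mmmx2}) after identifying $v^THv$ with $v^TB_l^\alpha v$.

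I do not foresee a serious obstacle: the only arithmetic point that needs care is verifying $\cosh 1<7/4$ so that both strict bounds $f_{\min}>1/12$ and $f_{\max}<2$ hold simultaneously under the hypothesis $\lambda h\leq 1$. Everything else is bookkeeping (the skew-symmetric reduction and the direct reading-off of the Toeplitz generating function).
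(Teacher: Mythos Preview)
Your proof is correct and follows essentially the same route as the paper: reduce $v^TB_l^\alpha v$ to the quadratic form of the symmetric part, and then bound the eigenvalues of that symmetric tridiagonal Toeplitz matrix using the hypothesis $\lambda h\le 1$ (which amounts to $\cosh(\lambda h)<7/4$). The only cosmetic difference is that the paper writes down the explicit eigenvalues $\tfrac{7}{12}+\tfrac{1}{6}(e^{\lambda h}+e^{-\lambda h})\cos(j\pi/M)$ of the shifted symmetric part, whereas you bound them via the generating function and Lemma~\ref{sy111}; both arguments hinge on exactly the same arithmetic inequality.
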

\begin{proof}
It's easy to check that $\frac{(B_l^{ \alpha } -\frac{1}{12}I)+(B_l^{ \alpha } -\frac{1}{12}I)^T}{2}$ is positive definite, i.e.,
$$\varrho\left(\frac{(B_l^{ \alpha } -\frac{1}{12}I)+(B_l^{ \alpha } -\frac{1}{12}I)^T}{2}\right)_j= \varrho\left(\frac{B_l^{ \alpha } +(B_l^{ \alpha } )^T-\frac{1}{6}I}{2}\right)_j= \frac{7}{12}+ \frac{1}{6}(e^{-\lambda h}+ e^{\lambda h})\cos\left(\frac{j\pi}{M}\right)>0,\,\,\, j=1,\cdots, M-1.$$
According to Lemma \ref{syy}, we get  $v^T (B_l^{ \alpha } -\frac{1}{12}I)v >0$ for all real nonzero vectors $v$, which means
\begin{equation}\label{mmmx}
   v^T B_l^{ \alpha } v >v^T\frac{1}{12}Iv .
\end{equation}
On the other hand, we know that $\frac{(B_l^{ \alpha } -2I)+(B_l^{ \alpha } -2I)^T}{2}$ is negative  definite, i.e.,
$$\varrho\left(\frac{(B_l^{ \alpha } -2I)+(B_l^{ \alpha } -2I)^T}{2}\right)_j= \varrho\left(\frac{B_l^{ \alpha } +(B_l^{ \alpha } )^T-4I}{2}\right)_j= -\frac{4}{3}+ \frac{1}{6}(e^{-\lambda h}+ e^{\lambda h})\cos\left(\frac{j\pi}{M}\right)<0,\,\,\, j=1,\cdots, M-1.$$
Then $v^T (B_l^{ \alpha } -2I)v <0$ for all real nonzero vectors $v$, which means that
\begin{equation}\label{mmmx}
   v^T B_l^{ \alpha } v <2v^TIv .
\end{equation}
\end{proof}
\begin{lemma}[\cite{zhou2013quasi}]\label{ll3}
 Assume that $\{k_n\}$ and $\{p_n\}$ are nonnegative sequences, and the sequence ${\phi_n}$ satisfies
 \[\phi_0\leq g_0,\quad \phi_n\leq g_0+\sum_{l=0}^{n-1}p_l+\sum_{l=0}^{n-1}k_l\phi_l,\,n\geq1,\]
 where $ g_0\geq0$. Then the sequence $\{\phi_n\}$ satisfies
\[  \phi_n\leq \left( g_0+\sum_{l=0}^{n-1}p_l\right)\exp\left(\sum_{l=0}^{n-1}k_l\right),\,n\geq1.\]
\end{lemma}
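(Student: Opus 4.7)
The plan is to isolate the right-hand side of the hypothesis, denote it by $S_n$, extract a simple linear one-step recursion for $S_n$, and then iterate. Set $S_n = g_0 + \sum_{l=0}^{n-1} p_l + \sum_{l=0}^{n-1} k_l \phi_l$, so the hypothesis reads $\phi_n \leq S_n$ for $n \geq 1$, while $S_0 = g_0 \geq \phi_0$. Since the target estimate is a bound on $\phi_n$, it suffices to bound $S_n$.

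First I would compute the one-step increment $S_{n+1} - S_n = p_n + k_n \phi_n$. Substituting $\phi_n \leq S_n$ and using $k_n \geq 0$ yields the scalar recurrence $S_{n+1} \leq (1+k_n)\, S_n + p_n$. This is the key reduction: the original hypothesis couples $\phi_n$ to all earlier $\phi_l$ through weighted sums, whereas the auxiliary sequence $S_n$ obeys a plain inhomogeneous linear recursion that is trivial to solve.

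Next I would unwind this recursion by a short induction on $n$. With $\pi_n := \prod_{l=0}^{n-1}(1+k_l)$ (empty product equal to $1$), one finds $S_n \leq g_0\, \pi_n + \sum_{l=0}^{n-1} p_l \prod_{j=l+1}^{n-1}(1+k_j)$. Because $1+k_j \geq 1$, each inner product is majorised by $\pi_n$, so $S_n \leq \pi_n \bigl(g_0 + \sum_{l=0}^{n-1} p_l\bigr)$. Finally, the elementary inequality $1+x \leq e^x$ for $x \geq 0$ upgrades $\pi_n \leq \exp\bigl(\sum_{l=0}^{n-1} k_l\bigr)$, which combined with $\phi_n \leq S_n$ gives the claim.

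There is no genuine obstacle here, as this is a standard discrete Gr\"onwall inequality; the only thing that needs care is clerical. Specifically, one must verify the $n=0$ case separately (where the empty sums and product reduce the target to $\phi_0 \leq g_0$, which is exactly the assumed initial bound), and one must keep the shifting product indices aligned when iterating the recursion so that the outer product $\pi_n$ correctly dominates each inner product $\prod_{j=l+1}^{n-1}(1+k_j)$.
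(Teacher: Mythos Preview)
Your argument is correct and is the standard proof of the discrete Gr\"onwall inequality. Note that the paper does not supply its own proof of this lemma; it simply quotes the result from \cite{zhou2013quasi}, so there is nothing to compare against beyond observing that your derivation is the expected one.
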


\begin{theorem}\label{stabp1}
Let $\tilde{U}_j^n$ be the exact solution of (\ref{matappl}), and $ U_j^n $ the numerical solution of (\ref{matappl}) obtained in finite precision arithmetic. Then, when $0<h\leq1/\lambda$, the difference scheme (\ref{matappl}) is stable for all $1<\alpha<2$.
\end{theorem}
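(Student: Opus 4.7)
The strategy is a discrete energy estimate resting on Theorem~\ref{pdefi} and Lemma~\ref{dobb}. Let $\epsilon^{n} := \tilde{U}^{n} - U^{n}$ denote the error. Subtracting the exact-arithmetic version of (\ref{matappl}) from the finite-precision one produces the evolution equation
\begin{equation*}
(B_l^{\alpha} - P_l^{\alpha})\,\epsilon^{n+1} \;=\; B_l^{\alpha}\,\epsilon^{n} \;+\; r^{n+1},
\end{equation*}
with $r^{n+1}$ accounting for the round-off injected at step $n+1$ and $\epsilon^{0}$ for the initial perturbation. The goal is to bound $\|\epsilon^{n}\|$ uniformly for $n\tau \le T$.

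First I would pair both sides with $\epsilon^{n+1}$ in the Euclidean inner product. Decomposing $B_l^{\alpha}=H_B+S_B$ into symmetric and skew parts, the pure quadratic $(\epsilon^{n+1})^{T}B_l^{\alpha}\epsilon^{n+1}$ reduces to $(\epsilon^{n+1})^{T}H_B\epsilon^{n+1} =: \|\epsilon^{n+1}\|_{H_B}^{2}$, and Theorem~\ref{pdefi} delivers $-(\epsilon^{n+1})^{T}P_l^{\alpha}\epsilon^{n+1}>0$, so the whole left-hand side is bounded below by $\|\epsilon^{n+1}\|_{H_B}^{2}$. For the right-hand side I would apply Cauchy-Schwarz in the $H_B$-inner product to the symmetric piece, $(\epsilon^{n+1})^{T}H_B\epsilon^{n}\le\tfrac{1}{2}(\|\epsilon^{n+1}\|_{H_B}^{2}+\|\epsilon^{n}\|_{H_B}^{2})$, and treat the round-off term $(\epsilon^{n+1})^{T}r^{n+1}$ by Young. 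Lemma~\ref{dobb} then converts the resulting $\|\cdot\|_{H_B}$ inequality to one in the discrete $L^{2}$ norm $\|\cdot\|$ via the equivalence $\tfrac{1}{12}\|v\|_{\ell^{2}}^{2}<v^{T}H_B v<2\|v\|_{\ell^{2}}^{2}$.

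The main obstacle is the leftover antisymmetric residual $(\epsilon^{n+1})^{T}S_B\epsilon^{n}$. This term vanishes identically in the untempered case ($\lambda=0$ gives $S_B\equiv 0$), so it is the essential new difficulty brought in by tempering and is precisely why the stability argument must differ from that of the pure fractional scheme. One has $S_B=\tfrac{\sinh(\lambda h)}{6}(J-J^{T})$, hence $\|S_B\|_{2}\le\sinh(\lambda h)/3$; the mesh hypothesis $h\le1/\lambda$ is imposed exactly to keep this bounded by an absolute constant, after which Young's inequality combined once more with Lemma~\ref{dobb} absorbs the contribution into a term of the form $C\bigl(\|\epsilon^{n+1}\|^{2}+\|\epsilon^{n}\|^{2}\bigr)$.

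Assembling these pieces yields an inequality of the form
\begin{equation*}
\|\epsilon^{n+1}\|^{2} \;\le\; \|\epsilon^{n}\|^{2} \;+\; C\tau\bigl(\|\epsilon^{n+1}\|^{2}+\|\epsilon^{n}\|^{2}\bigr) \;+\; C\,\|r^{n+1}\|^{2},
\end{equation*}
and absorbing the $\|\epsilon^{n+1}\|^{2}$ on the right into the left for $\tau$ sufficiently small, the discrete Gr\"onwall inequality of Lemma~\ref{ll3} delivers the unconditional stability estimate $\|\epsilon^{n}\|^{2}\le C(T)\bigl(\|\epsilon^{0}\|^{2}+\sum_{\ell=1}^{n}\|r^{\ell}\|^{2}\bigr)$ for all $n\tau\le T$ and all $1<\alpha<2$, which is the claimed stability of the scheme (\ref{matappl}).
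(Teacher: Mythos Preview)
Your energy strategy is close in spirit to the paper's---both rely on Theorem~\ref{pdefi} for the sign of $(\epsilon^{n+1})^{T}P_l^{\alpha}\epsilon^{n+1}$ and on Lemma~\ref{dobb} for the norm equivalence---but there is a genuine gap at the step where you pass from the bound $C(\|\epsilon^{n+1}\|^{2}+\|\epsilon^{n}\|^{2})$ on the antisymmetric residual to $C\tau(\|\epsilon^{n+1}\|^{2}+\|\epsilon^{n}\|^{2})$ in the assembled inequality. The term $(\epsilon^{n+1})^{T}S_B\epsilon^{n}$ is controlled only by $\tfrac{\sinh(\lambda h)}{3}\|\epsilon^{n+1}\|\,\|\epsilon^{n}\|$; the hypothesis $\lambda h\le1$ caps this coefficient at $O(1)$, but nothing in the scheme manufactures a factor of $\tau$ here (the only $\tau$ in the error equation sits in front of $P_l^{\alpha}$, which you have already discarded by sign). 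With only $\|\epsilon^{n+1}\|_{H_B}^{2}\le\|\epsilon^{n}\|_{H_B}^{2}+C(\|\epsilon^{n+1}\|^{2}+\|\epsilon^{n}\|^{2})$, Gr\"onwall gives growth like $(1+C')^{n}$, i.e.\ exponential in the number of time steps rather than in $T=n\tau$, which is not a stability bound.

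The paper avoids this difficulty by \emph{not} splitting $B_l^{\alpha}$ and \emph{not} invoking Gr\"onwall. It works directly with the quantity $E^{k}=h(\varepsilon^{k})^{T}B_l^{\alpha}\varepsilon^{k}$ (and carries no round-off source term, only the initial perturbation) and argues the one-step monotonicity $E^{k+1}\le E^{k}$ via the inequality $(\varepsilon^{k+1})^{T}B_l^{\alpha}\varepsilon^{k}\le\tfrac{1}{2}\bigl((\varepsilon^{k+1})^{T}B_l^{\alpha}\varepsilon^{k+1}+(\varepsilon^{k})^{T}B_l^{\alpha}\varepsilon^{k}\bigr)$; telescoping and Lemma~\ref{dobb} then yield $\|\varepsilon^{n}\|^{2}\le24\|\varepsilon^{0}\|^{2}$ with no accumulated constants. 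In effect the paper treats the full bilinear form $u^{T}B_l^{\alpha}v$ as an inner product, sidestepping the skew contribution that derails your decomposition. If you want to make your route work, one clean fix is to conjugate by the diagonal matrix $D=\mathrm{diag}(e^{-j\lambda h})$, which renders $D^{-1}B_l^{\alpha}D$ genuinely symmetric (the tempering factors in $P_l^{\alpha}$ transform the same way); then the $S_B$ term disappears and the argument closes without Gr\"onwall.
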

\begin{proof}
Denoting $\varepsilon^k_i=\tilde{U}_j^k-U_j^k$ and $\varepsilon^k=(\varepsilon^k_1 ,\varepsilon^k_2,\cdots,\varepsilon^k_{M-1})^T$, from (\ref{matappl}), we obtain
\begin{equation} \label{errsta}
(B_l^{ \alpha } -P^{ \alpha }_l)\varepsilon^{k+1}= B_l^{ \alpha }  \varepsilon^{k},
\end{equation}
where $k=0,2,\cdots,n-1$.
 Multiplying (\ref{errsta}) by $h (\varepsilon^{k+1})^T$, we obtain that
\begin{equation}\label{laea}
h (\varepsilon^{k+1})^T B_l^{ \alpha } \varepsilon^{k+1} =h (\varepsilon^{k+1})^T P^{ \alpha }_l \varepsilon^{k+1}+ h (\varepsilon^{k+1})^T B_l ^{ \alpha } \varepsilon^{k}.
\end{equation}
 By Theorem  \ref{pdefi}, we know that the matrix  $P^{ \alpha }_l$  satisfies $v^T P^{ \alpha }_lv <0$  for all real nonzero vectors $v$. Thus
 \[ (\varepsilon^{k+1})^T P^{ \alpha }_l \varepsilon^{k+1}\leq0.\]
 Then (\ref{laea}) leads to
 \begin{equation*}
h (\varepsilon^{k+1})^T B_l^{ \alpha } \varepsilon^{k+1}
\leq h (\varepsilon^{k+1})^T B_l^{ \alpha }  \varepsilon^{k}
\leq \frac{1}{2}(h (\varepsilon^{k+1})^T B_l^{ \alpha }  \varepsilon^{k+1}+h (\varepsilon^{k})^T B_l^{ \alpha }  \varepsilon^{k}),
\end{equation*}
which implies
\[h (\varepsilon^{k+1})^T B_l^{ \alpha } \varepsilon^{k+1}
\leq   h (\varepsilon^{k})^T B_l^{ \alpha }  \varepsilon^{k} .\]
Denoting $E^k= h (\varepsilon^{k})^T B_l ^{ \alpha } \varepsilon^{k} $, we have
\[E^{k+1}\leq E^{k }.\]
Taking  $k$ from $0$ to $n$ yields
\begin{equation*}
   E^{n+1}\leq E^{n }\leq E^{n-1 }\leq\cdots\leq E^{0 }.
\end{equation*}
Together with Lemma \ref{dobb}, we get
\begin{equation*}
\frac{1}{12} ||\varepsilon^{n+1}||^2 \leq E^{n+1} \leq E^{0 }\leq 2||\varepsilon^{0}||^2.
\end{equation*}
Then
\begin{equation*}
  ||\varepsilon^{n+1}||^2 \leq   24||\varepsilon^{0}||^2.
\end{equation*}
Therefore, the difference scheme (\ref{matappl}) is stable.
\end{proof}
\begin{theorem} \label{convv}
Let $u^n$ be the exact solution of (\ref{tequ1}), and $U^n$ the solution of the given finite difference scheme (\ref{matappl}). Then we have
\[\|u^n-U^n\|\leq C(\tau +h^3),\]
for all $1\leq n\leq N$, where $C$ is a constant independent of $n$, $\tau$, and $h$.
\end{theorem}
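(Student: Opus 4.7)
The plan is to mimic the energy argument of Theorem \ref{stabp1}, but applied to the error vector $e^n := u^n - U^n$, closing it with a discrete Gr\"onwall step to absorb the truncation residual. First I would substitute the exact solution into the scheme (\ref{matappl}) using the consistency relation (\ref{spacedissss}) to obtain
\[(B_l^\alpha - P_l^\alpha)\,u^{n+1} = B_l^\alpha u^n + \tau B_l^\alpha F_l^{n+1} + H_l^{n+1} + \tau\rho^{n+1},\]
where the truncation vector $\rho^{n+1}$ satisfies $|\rho^{n+1}_i|\le C_1(\tau+h^3)$ and hence $\|\rho^{n+1}\|^2 = h\sum_i|\rho^{n+1}_i|^2 \le (b-a)C_1^2(\tau+h^3)^2$. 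Subtracting (\ref{matappl}) and using $e^0=0$ yields the error recursion
\[(B_l^\alpha - P_l^\alpha)\,e^{n+1} = B_l^\alpha\,e^n + \tau\rho^{n+1}.\]

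Next I would multiply on the left by $h(e^{n+1})^T$. By Theorem \ref{pdefi} we have $(e^{n+1})^T P_l^\alpha e^{n+1}<0$, so this term falls off with the favourable sign; with $E^k := h(e^k)^T B_l^\alpha e^k$ this gives
\[E^{n+1} \le h(e^{n+1})^T B_l^\alpha e^n + h\tau(e^{n+1})^T\rho^{n+1}.\]
The first right-hand term is bounded by $\tfrac12(E^{n+1}+E^n)$ exactly as in Theorem \ref{stabp1} (Cauchy--Schwarz for the positive quadratic form $v^T B_l^\alpha v$ supplied by Lemma \ref{dobb}). For the residual term, Cauchy--Schwarz together with the norm equivalence of Lemma \ref{dobb} gives
\[h\tau(e^{n+1})^T\rho^{n+1} \le \tau\|e^{n+1}\|\,\|\rho^{n+1}\| \le \tfrac{\tau}{2}\|e^{n+1}\|^2 + \tfrac{\tau}{2}\|\rho^{n+1}\|^2 \le 6\tau E^{n+1} + \tfrac{\tau}{2}\|\rho^{n+1}\|^2.\]
Rearranging, one obtains, for $\tau$ small enough that $1-12\tau>0$, the recursion $E^{n+1} \le (1+C\tau)E^n + C\tau(\tau+h^3)^2$ for a constant $C$ depending only on $K,\lambda,\alpha,b-a,C_1$, and independent of $n,\tau,h$.

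Starting from $E^0=0$, the discrete Gr\"onwall Lemma \ref{ll3} now yields $E^n\le C'Te^{CT}(\tau+h^3)^2$ uniformly for $1\le n\le N=T/\tau$, and the lower bound $\|v\|^2 < 12\,v^T B_l^\alpha v$ of Lemma \ref{dobb} converts this into $\|u^n-U^n\|\le C(\tau+h^3)$, as claimed. The one technical subtlety is precisely the one already encountered in the stability proof: $B_l^\alpha$ fails to be symmetric when $\lambda>0$, so the step $2h(e^{n+1})^T B_l^\alpha e^n\le E^{n+1}+E^n$ must be read through the symmetric part $S_B=\tfrac12(B_l^\alpha+(B_l^\alpha)^T)$, which is uniformly positive definite under the hypothesis $\lambda h\le 1$ that Lemma \ref{dobb} requires. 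Apart from verifying that each constant entering the Gr\"onwall step depends only on the continuous data (and not on the mesh), the rest is routine energy bookkeeping.
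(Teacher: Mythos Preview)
Your proposal is correct and follows essentially the same energy argument as the paper: derive the error recursion $(B_l^{\alpha}-P_l^{\alpha})\epsilon^{k+1}=B_l^{\alpha}\epsilon^{k}+\tau R^{k+1}$, multiply by $h(\epsilon^{k+1})^{T}$, discard the $P_l^{\alpha}$ term via Theorem~\ref{pdefi}, control the cross term through Lemma~\ref{dobb}, and close with the discrete Gr\"onwall Lemma~\ref{ll3}. The only minor difference is that the paper first sums the one-step inequality and then isolates the top residual contribution $2\tau h(\epsilon^{n})^{T}R^{n}$ with a weighted Young split (coefficient $\tfrac{1}{48\tau}$), thereby avoiding your smallness restriction $\tau<1/12$; your remark on the asymmetry of $B_l^{\alpha}$ is a subtlety the paper itself leaves implicit.
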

\begin{proof}
Denoting $\epsilon^k_j=u(x_j,t_k)-U_j^k$, from (\ref{spacedissss}), we obtain
\begin{equation} \label{errstacon}
(B_l^{ \alpha } -P^{ \alpha }_l)\epsilon^{k+1}= B_l^{ \alpha }  \epsilon^{k}+\tau R^{k+1 },
\end{equation}
where $\epsilon^k=(\epsilon^ K ,\epsilon^k_2,\cdots,\epsilon^k_{M-1})^T$, $R^{k+1 }=(R^{k+1 }_1,R^{k+1 }_2,\cdots,R^{k+1 }_{M-1})^T$ and $0\leq k\leq n-1$.
  Multiplying (\ref{errstacon}) by $h (\epsilon^{k+1})^T$, we get
\begin{equation*}
h (\epsilon^{k+1})^T(B_l^{ \alpha } -P^{ \alpha }_l)\epsilon^{k+1}=h (\epsilon^{k+1})^T B_l^{ \alpha }  \epsilon^{k}+\tau h (\epsilon^{k+1})^T R^{k+1 },
\end{equation*}
  By Theorem  \ref{pdefi}, we have
\[ \frac{1}{2}h (\epsilon^{k+1})^T B_l^{ \alpha }  \epsilon^{k+1}\leq \frac{1}{2}h (\epsilon^{k })^T B_l^{ \alpha }  \epsilon^{k}+\tau h (\epsilon^{k+1})^T R^{k+1 }.\]
Let $e^k= h (\epsilon^{k})^T B_l^{ \alpha }  \epsilon^{k} $, and we have
\begin{equation}\label{qqqq}
  e^{k+1} \leq  e^k + 2\tau h (\epsilon^{k+1})^T R^{k+1 }.
\end{equation}
Together with Lemma \ref{dobb}, summing up (\ref{qqqq}) for all $0\leq k\leq n-1$ shows that
\begin{equation*}
\frac{1}{12} ||\epsilon^{n}||^2 \leq e^{n} \leq e^0 +2\tau \sum_{k=0}^{n-1}  h (\epsilon^{k+1})^T R^{k+1 }
  \leq \frac{1}{48 \tau}2\tau||\epsilon^{n }||^2+ \frac{24 \tau}{2}2\tau||R^{n}||^2+2\tau \sum_{k=0}^{n-2}  h (\epsilon^{k+1})^T R^{k+1 } .
\end{equation*}
Then
\begin{equation*}
 \frac{1}{24}   \| \epsilon^n\|^2
 \leq  24 \tau^2||R^{n}||^2+2\tau \sum_{k=0}^{n-2}  h (\epsilon^{k+1})^T R^{k+1 }
\leq  \frac{1}{24}\tau \sum_{k=1}^{n-1} ||\epsilon ^{k  }||^2+24\tau \sum_{k=1}^{n-1} || R^{k }||^2+ 24 \tau^2||R^{n}||^2.
\end{equation*}
Noticing that $|R^{k+1}_j|\leq c(\tau+h^3)$ for $1\leq k\leq n$ and utilizing the discrete Gronwall's inequality,   we obtain
\begin{equation*}
    \|\epsilon^n\|^2
\leq  \tau \sum_{k=1}^{n-1} ||\epsilon ^{k  }||^2+24^2\tau \sum_{k=1}^{n-1} || R^{k }||^2
  \leq C(\tau +h^3)^2.
\end{equation*}
\end{proof}

By the similar idea, we can prove the following results; and the details are omitted here.

\begin{theorem}\label{stabp}
Let  $\tilde{U}_j^n$ be the exact solution of  (\ref{matapp22}), and $ U_j^n $ the numerical solution of  (\ref{matapp22}) obtained in finite precision arithmetic.
Then when $ 0<h\leq1/\lambda$, the difference schemes (\ref{matapp22}) is stable for all $1<\alpha<2$.
\end{theorem}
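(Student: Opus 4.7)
The plan is to mirror the energy argument from Theorem \ref{stabp1}, exploiting the fact that the right-sided scheme differs from the left-sided one only by matrix transposition: $B_r^{\alpha}=(B_l^{\alpha})^T$ and $P_r^{\alpha}=(P_l^{\alpha})^T$. The first step is to subtract the exact-arithmetic version of (\ref{matapp22}) from the perturbed one, yielding the error recursion
\begin{equation*}
(B_r^{\alpha}-P_r^{\alpha})\varepsilon^{k+1}=B_r^{\alpha}\varepsilon^{k},\qquad k=0,1,\ldots,n-1,
\end{equation*}
where $\varepsilon^{k}=\tilde U^{k}-U^{k}$. Multiplying by $h(\varepsilon^{k+1})^T$ on the left gives the identity
\begin{equation*}
h(\varepsilon^{k+1})^T B_r^{\alpha}\varepsilon^{k+1}=h(\varepsilon^{k+1})^T P_r^{\alpha}\varepsilon^{k+1}+h(\varepsilon^{k+1})^T B_r^{\alpha}\varepsilon^{k}.
\end{equation*}

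The second step is to invoke Theorem \ref{pdefi}, which already states that $v^{T}P_r^{\alpha}v<0$ for all nonzero real $v$ whenever $\lambda h\le 1$ and $1<\alpha<2$; hence the first term on the right side is nonpositive and can be dropped. The third step is to estimate the cross term $h(\varepsilon^{k+1})^T B_r^{\alpha}\varepsilon^{k}$ by the Cauchy-Schwarz-type inequality with respect to the bilinear form $\langle u,v\rangle:=h\,u^{T}B_r^{\alpha}v$, using that this form is symmetric positive definite modulo the scalar identity (because its value depends only on the symmetric part of $B_r^{\alpha}$, which coincides with that of $B_l^{\alpha}$). This reduces the inequality to the monotonicity $E^{k+1}\le E^{k}$ with $E^{k}:=h(\varepsilon^{k})^{T}B_r^{\alpha}\varepsilon^{k}$, and iterating yields $E^{n+1}\le E^{0}$.

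The fourth step is to translate $E^{k}$ back to the discrete $L^{2}$ norm. This is where I need the analog of Lemma \ref{dobb} for $B_r^{\alpha}$ in place of $B_l^{\alpha}$, namely $\tfrac{1}{12}v^{T}v<v^{T}B_r^{\alpha}v<2\,v^{T}v$ for every nonzero real $v$ when $h\le 1/\lambda$. But since $v^{T}B_r^{\alpha}v=v^{T}(B_l^{\alpha})^{T}v=v^{T}B_l^{\alpha}v$ (the quadratic form is unchanged under transposition of the matrix), these bounds are inherited for free from Lemma \ref{dobb}. Combining gives $\tfrac{1}{12}\|\varepsilon^{n+1}\|^{2}\le E^{n+1}\le E^{0}\le 2\|\varepsilon^{0}\|^{2}$, i.e.\ $\|\varepsilon^{n+1}\|^{2}\le 24\|\varepsilon^{0}\|^{2}$, establishing stability.

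The only point worth double-checking is the third step: the Cauchy-Schwarz bound requires $B_r^{\alpha}$ itself (not only its symmetric part) to generate a genuine inner product on the iterates. Because the quadratic form depends only on the symmetric part and that symmetric part is positive definite (by Lemma \ref{dobb}), one can safely replace $B_r^{\alpha}$ by its symmetric part in the cross-term estimate and carry through Young's inequality exactly as in the proof of Theorem \ref{stabp1}. I expect no genuine obstacle; the proof is essentially verbatim the left-sided argument with $l\leftrightarrow r$.
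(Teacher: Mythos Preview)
Your proposal is precisely what the paper intends: the paper omits the proof entirely, stating only that it follows ``by the similar idea'' from Theorem~\ref{stabp1}, and you have correctly identified that the transposition relations $B_r^{\alpha}=(B_l^{\alpha})^{T}$, $P_r^{\alpha}=(P_l^{\alpha})^{T}$ together with the invariance of quadratic forms under transposition let Theorem~\ref{pdefi} and Lemma~\ref{dobb} carry over verbatim. The energy argument then runs word-for-word as in Theorem~\ref{stabp1}, so your write-up matches the paper's approach exactly.
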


\begin{theorem} \label{convv}
Let $u^n$ be the exact solution of (\ref{tequ2}), and $U^n$ the solution of the given finite difference scheme (\ref{matapp22}). Then we have
\[\|u^n-U^n\|\leq C(\tau +h^3),\]
for all $1\leq n\leq N$, where $C$ is a constant independent of $n$, $\tau$, and $h$.
\end{theorem}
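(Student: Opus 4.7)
The plan is to imitate the argument used for the previous convergence theorem (the one for the left scheme (\ref{matappl})), since (\ref{matapp22}) is obtained from (\ref{matappl}) by transposing all the involved operators. Setting $\epsilon_j^k=u(x_j,t_k)-U_j^k$ and $\epsilon^k=(\epsilon^k_1,\dots,\epsilon^k_{M-1})^T$, I would subtract (\ref{matapp22}) from (\ref{spacedissss22}) to get the error equation
\[(B_r^{\alpha}-P_r^{\alpha})\epsilon^{k+1}=B_r^{\alpha}\epsilon^k+\tau R^{k+1},\qquad 0\le k\le n-1,\]
where $R^{k+1}=(R^{k+1}_1,\dots,R^{k+1}_{M-1})^T$ satisfies $|R^{k+1}_i|\le C_2(\tau+h^3)$ by the local truncation estimate following (\ref{spacedissss22}).

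Next I would collect the two structural ingredients the proof needs. Because $P_r^{\alpha}=(P_l^{\alpha})^T$, Theorem \ref{pdefi} immediately gives $v^T P_r^{\alpha}v<0$ for every real nonzero $v$, so $(\epsilon^{k+1})^T P_r^{\alpha}\epsilon^{k+1}\le 0$ and the $P_r^{\alpha}$ term can be dropped after testing. Similarly, since $B_r^{\alpha}=(B_l^{\alpha})^T$ shares its symmetric part with $B_l^{\alpha}$, Lemma \ref{dobb} applies verbatim, yielding $\tfrac{1}{12}v^Tv<v^T B_r^{\alpha}v<2v^Tv$ for all nonzero $v$ whenever $0<h\le 1/\lambda$.

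With these facts in hand I would multiply the error equation on the left by $h(\epsilon^{k+1})^T$, use $v^T P_r^{\alpha}v\le 0$ to discard that term, and apply the same Young-type inequality as in the left case to bound $h(\epsilon^{k+1})^T B_r^{\alpha}\epsilon^k\le\tfrac{1}{2}(e^{k+1}+e^k)$, where $e^k:=h(\epsilon^k)^T B_r^{\alpha}\epsilon^k$. This produces
\[e^{k+1}\le e^k+2\tau h(\epsilon^{k+1})^T R^{k+1}.\]
Summing for $0\le k\le n-1$, splitting the top cross term by Young's inequality to absorb a fraction of $\|\epsilon^n\|^2$ into the left side via the lower bound in Lemma \ref{dobb}, and then invoking the discrete Gronwall inequality (Lemma \ref{ll3}) on the remaining terms gives $\|\epsilon^n\|^2\le C(\tau+h^3)^2$, which is the desired estimate.

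The only subtle point, and hence the place I would spend care, is the Young-type step with the non-symmetric $B_r^{\alpha}$: I would note that the quadratic forms $v^T B_r^{\alpha}v$ only depend on the symmetric part $(B_r^{\alpha}+(B_r^{\alpha})^T)/2=(B_l^{\alpha}+(B_l^{\alpha})^T)/2$, which is positive definite by Lemma \ref{dobb}, so the same inequality that legitimized the left-case argument applies here unchanged. Past that, every remaining step is a verbatim transcription of the preceding proof, and I would simply refer to it rather than redo the routine algebra, which is precisely why the authors state that the details are omitted.
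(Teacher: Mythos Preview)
Your proposal is correct and is precisely the approach the paper intends: the authors explicitly state that the proof follows ``by the similar idea'' from the left-case convergence theorem and omit the details, and you have supplied exactly those details by exploiting the transpose relations $B_r^{\alpha}=(B_l^{\alpha})^T$, $P_r^{\alpha}=(P_l^{\alpha})^T$ together with Theorem~\ref{pdefi} and Lemma~\ref{dobb}. Your observation that the quadratic forms and the Young-type step depend only on the common symmetric part of $B_l^{\alpha}$ and $B_r^{\alpha}$ is the right justification for carrying the left-case inequalities over verbatim.
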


\section{Numerical examples}
In this section, we discuss the effectiveness of the third order quasi-compact difference schemes derived in the above. And the presented numerical results confirm the theoretical ones.

Let
\[e(\tau,h)= \left(h\sum_{i=1}^{M-1}\left(u(x_i,t_N)-U_i^ N\right)^2\right)^\frac{1}{2},\]
where $u(x_i,t_N)$  represents the exact solution and $U_i^ N$ the numerical solutions at the grid point $(x_i,t_N)$ with the mesh step sizes $\tau$ and $h$.
Together with the equations
\[  \,_a D_{x}^{(\alpha ,\lambda)}(e^{-\lambda x}(x-a)^{j })=\frac{\Gamma(1+j)e^{-\lambda x}}{\Gamma(1+j-\alpha)}(x-a)^{j-\alpha } \]
and
\[  \,_x D_{b}^{(\alpha ,\lambda)}(e^{ \lambda x}(b-x)^{j })=\frac{\Gamma(1+j)e^{ \lambda x}}{\Gamma(1+j-\alpha)}(b-x)^{j-\alpha }, \]
we show the following examples. To test the order of convergence, except Table \ref{tt5.3}, where $\tau=h^{3/2}$, for all the other Tables, $\tau=h^{3}$ is taken.
\begin{example}\label{tex5.1}
We consider the   tempered space fractional diffusion equation
\begin{equation}
\frac{\partial u}{\partial t}=\,_0D_x^{\alpha,\lambda }u(x)-e^{-t-\lambda x}\left(x^j+\frac{\Gamma(j+1) x^{j -  \alpha}}{\Gamma(1+j - \alpha)}-\alpha\lambda^{\alpha-1} (j x ^{j-1}-\lambda x^j)-\lambda^\alpha x ^j\right) ,\quad (x,t)\in(0,1)\times(0,0.1],
\end{equation}
with the boundary conditions  $u(0,t)=0$,  $u(1,t)=e^{-t-\lambda}$ and  the initial value  $u(x,0)=e^{-\lambda x}x^j, \,x\in[0,1]$, where $j\in \mathbb{N}$. The exact solution is  $u(x)=e^{-t-\lambda x}x^{j}$.
\end{example}

In Table \ref{tex5.1t1}, we confirm the convergence orders and show that the regularity of the solution is necessary for obtaining the desired convergence orders, even though it is weaker than $u\in W^{3+\alpha,1}( \mathbb{R} )$ required in the proof of Theorem \ref{th222}. Table \ref{tex4444} further confirms the convergence orders and shows that, as in the proof of Theorem \ref{stabp1}, the condition $\lambda h<1$ is required for ensuring the stability of the schemes.

\begin{table}[htbp]
\centering\small
\caption{ The errors $e(\tau,h)$ and spatial convergence orders of the quasi-compact scheme (\ref{matappl}) by computing Example \ref{tex5.1} with $\lambda=1$.}\vspace{1em} {\begin{tabular} {@{}cccccccc@{}} \hline
 &  &\multicolumn{2}{c}{$j=1$} & \multicolumn{2}{c}{$j=3$ }& \multicolumn{2}{c}{$j=5$ }\\
\cline{3-4} \cline{5-6} \cline{7-8}
$\alpha$&$h$& $e(\tau,h)$ &rate & $e(\tau,h)$ &rate  & $e(\tau,h)$ &rate \\
 \hline
1.1&$0.1$   & $   4.4768e-05    $ &   $         $   & $      7.2042e-06 $ &   $         $  & $    6.0259e-06  $ &   $         $ \\
   &$0.05$  & $    1.4666e-05   $ &   $   1.6100    $   &  $    8.9495e-07   $ &   $   3.0090    $  &  $    7.6037e-07  $ &   $    2.9864    $ \\
   &$0.025$ & $     3.9039e-06  $ & $      1.9095 $     &  $   1.1183e-07    $  &   $   3.0005  $  &  $   9.5387e-08   $  &   $   2.9948   $   \\
   &$0.0125$& $    7.7769e-07   $ &   $   2.3277    $     &  $    1.3986e-08 $  &   $ 2.9992   $   &  $     1.1945e-08 $  &   $  2.9974    $  \\
 \hline
1.5&$0.1$   & $    2.9214e-04    $ &   $          $   & $   2.2124e-05   $ &   $          $ & $    9.1408e-05   $ &   $          $  \\
   &$0.05$  &  $   6.1994e-05      $ & $   2.2364  $   &  $ 3.4597e-06    $ &   $   2.6769  $  &  $   1.1772e-05   $ &   $ 2.9569    $    \\
   &$0.025$ &  $     1.1896e-05  $  &  $  2.3816   $  &  $  4.9266e-07    $  &  $     2.8120 $  &  $      1.4927e-06  $  &  $  2.9794    $   \\
   &$0.0125$&  $   2.1899e-06      $  &   $   2.4416  $ &  $  6.6604e-08     $  &   $    2.8869   $  &  $  1.8791e-07    $  &   $  2.9898    $   \\
\hline
1.9&$0.1$   & $     6.2875e-04  $ &   $         $         &   $  1.1388e-04     $ &   $         $ &     $     2.7192e-04  $ &   $         $  \\
   &$0.05$  &  $   1.4818e-04     $ &   $   2.0852   $    &  $   1.6833e-05     $ &   $ 2.7582  $  &  $    3.4977e-05    $ &   $    2.9587 $  \\
   &$0.025$ &  $     3.3898e-05   $  &  $   2.1280   $     &  $    2.2847e-06   $  &  $   2.8812  $  &  $    4.4201e-06 $  &  $   2.9842   $   \\
   &$0.0125$&  $  7.7325e-06     $  &  $     2.1322   $  &  $   2.9761e-07     $ &  $    2.9405      $  &  $   5.5510e-07    $  &  $   2.9933    $   \\
\hline
\end{tabular}}\label{tex5.1t1}
\end{table}

\begin{table}[htbp]\label{table1}
\centering\small
\caption{The errors $e(\tau,h)$ and spatial convergence orders of the quasi-compact scheme (\ref{matappl}) by computing Example \ref{tex5.1} for different $\lambda$ with $j=5$. }\vspace{1em} {\begin{tabular} {@{}cccccccc@{}} \hline
 &  &\multicolumn{2}{c}{$\lambda=1$} & \multicolumn{2}{c}{$\lambda=10$ } & \multicolumn{2}{c}{$\lambda=50$ }\\
\cline{3-4} \cline{5-6}\cline{7-8}
$\alpha $&$h$ & $e(\tau,h)$ &rate & $e(\tau,h)$ &rate  & $e(\tau,h)$ &rate \\
 \hline
1.1&$0.1$   & $   6.0259e-06    $ &   $         $   & $   4.0133e-07    $ &   $    $  & $    1.4755e-09    $ &   $         $ \\
   &$0.05$  & $   7.6037e-07     $ &   $    2.9864    $   &  $    6.1913e-08    $ &   $    2.6965    $  &  $    1.9198e-09 $ &   $  -3.7982    $ \\
   &$0.025$ & $   9.5387e-08     $ & $     2.9948    $     &  $   8.3494e-09    $  &   $    2.8905   $  &  $   5.2808e-10     $  &   $  1.8622     $   \\
   &$0.0125$& $   1.1945e-08      $ &   $    2.9974     $     &  $   1.0774e-09    $  &   $   2.9541   $   &  $   8.6959e-11  $  &   $    2.6023  $  \\
 \hline
1.5&$0.1$   & $     9.1408e-05    $ &   $          $   & $   5.8760e-06  $ &   $          $ & $   1.1852e-02   $ &   $          $  \\
   &$0.05$  &  $   1.1772e-05        $ & $  2.9569    $   &  $ 9.3548e-07     $ &   $    2.6510  $  &  $    1.9228e-08   $ &   $   1.9234    $    \\
   &$0.025$ &  $    1.4927e-06   $  &  $   2.9794    $  &  $ 1.3004e-07    $  &  $   2.8467 $  &  $    5.6716e-09    $  &  $ 1.7614 $   \\
   &$0.0125$&  $     1.8791e-07     $  &   $   2.9898    $ &  $   1.7048e-08    $  &   $    2.9314     $  &  $     9.7853e-10   $  &   $   2.5351     $   \\
\hline

1.9&$0.1$   & $     2.7192e-04  $ &   $      $         &   $     1.7584e-05   $ &   $         $ &     $   1.3153e+47    $ &   $         $  \\
   &$0.05$  &  $   3.4977e-05      $ &   $  2.9587     $    &  $   2.7367e-06     $ &   $   2.6837  $  &  $    Inf     $ &   $ -Inf     $  \\
   &$0.025$ &  $   4.4201e-06      $  &  $  2.9842     $     &  $    3.7638e-07     $  &  $   2.8622  $  &  $  NaN     $  &  $     NaN   $   \\
   &$0.0125$&  $    5.5510e-07    $  &  $     2.9933    $  &  $   4.8967e-08       $ &  $    2.9423        $  &  $   NaN     $  &  $   NaN      $   \\
\hline
\end{tabular}}\label{tex4444}
\end{table}
\begin{example}\label{tex5.2}
Consider the following  tempered space fractional diffusion equation
\begin{equation}
\frac{\partial u}{\partial t}=\,_xD_1^{\alpha,\lambda }u(x)-e^{-t+\lambda x}\left((1-x)^j+\frac{\Gamma(j+1) (1-x)^{j -  \alpha}}{\Gamma(1+j - \alpha)}+\alpha\lambda^{\alpha-1} (\lambda (1-x)^j-j (1-x) ^{j-1})-\lambda^\alpha (1-x) ^j\right) ,
\end{equation}
where $ (x,t)\in(0,1)\times(0,0.1] $ and $j\in\mathbb{N}$.  The boundary conditions are $u(0,t)=e^{-t }$,  $u(1,t)=0 $ and  the initial value is $u(x,0)=e^{\lambda x}(1-x)^j, \,x\in[0,1]$. The exact solution is  $u(x)=e^{-t+\lambda x}(1-x)^{j}$.
\end{example}
\begin{table}[htbp]
\centering\small
\caption{ The errors $e(\tau,h)$ and spatial convergence orders of the quasi-compact scheme (\ref{matapp22}) by computing Example \ref{tex5.2} with $\lambda=1$.}\vspace{1em} {\begin{tabular} {@{}cccccccc@{}} \hline
 &  &\multicolumn{2}{c}{$j=1$} & \multicolumn{2}{c}{$j=3$ }& \multicolumn{2}{c}{$j=5$ }\\
\cline{3-4} \cline{5-6} \cline{7-8}
$\alpha$&$h$& $e(\tau,h)$ &rate & $e(\tau,h)$ &rate  & $e(\tau,h)$ &rate \\
 \hline
1.1&$0.1$   & $   1.2169e-04    $ &   $         $   & $    1.9583e-05    $ &   $         $  & $     1.6380e-05  $ &   $         $ \\
   &$0.05$  & $   3.9867e-05    $ &   $   1.6100   $   &  $     2.4327e-06   $ &   $   3.0090     $  &  $   2.0669e-06   $ &   $ 2.9864        $ \\
   &$0.025$ & $   1.0612e-05    $ & $   1.9095    $     &  $   3.0398e-07     $  &   $  3.0005   $  &  $    2.5929e-07   $  &   $     2.9948  $   \\
   &$0.0125$& $    2.1140e-06   $ &   $   2.3277    $     &  $  3.8018e-08  $  &   $ 2.9992    $   &  $   3.2469e-08   $  &   $    2.9974   $  \\
 \hline
1.5&$0.1$   & $  7.9411e-04     $ &   $          $   & $   6.0140e-05   $ &   $          $ & $    2.4847e-04  $ &   $          $  \\
   &$0.05$  &  $   1.6852e-04       $ & $  2.2364   $   &  $   9.4045e-06   $ &   $    2.6769  $  &  $     3.2000e-05  $ &   $   2.9569   $    \\
   &$0.025$ &  $    3.2337e-05   $  &  $   2.3816   $  &  $   1.3392e-06    $  &  $  2.8120    $  &  $   4.0577e-06     $  &  $  2.9794     $   \\
   &$0.0125$&  $    5.9527e-06     $  &   $  2.4416    $ &  $     1.8105e-07   $  &   $   2.8869    $  &  $   5.1080e-07  $  &   $    2.9898   $   \\
\hline
1.9&$0.1$   & $   1.7091e-03   $ &   $         $          &   $    3.0956e-04   $ &   $         $ &     $   7.3915e-04    $ &   $         $  \\
   &$0.05$  &  $  4.0278e-04      $ &   $  2.0852    $     &  $   4.5756e-05     $ &   $ 2.7582   $  &  $     9.5077e-05  $ &   $    2.9587 $  \\
   &$0.025$ &  $    9.2145e-05    $  &  $   2.1280    $     &  $    6.2106e-06  $  &  $  2.8812   $  &  $   1.2015e-05   $  &  $  2.9842    $   \\
   &$0.0125$&  $   2.1019e-05    $  &  $   2.1322     $    &  $     8.0898e-07    $ &  $    2.9405     $  &  $  1.5089e-06    $  &  $     2.9933  $   \\
\hline
\end{tabular}}\label{tex5.2t1}
\end{table}

\begin{table}[htbp]
\centering\small
\caption{  The errors $e(\tau,h)$ and spatial convergence orders of the quasi-compact scheme (\ref{matapp22}) by computing   Example \ref{tex5.2} for different $\lambda $ with $j=5$.}\vspace{1em} {\begin{tabular} {@{}cccccccc@{}} \hline
 &  &\multicolumn{2}{c}{$\lambda=1$} & \multicolumn{2}{c}{$\lambda=10$ } & \multicolumn{2}{c}{$\lambda=50$ }\\
\cline{3-4} \cline{5-6}\cline{7-8}
$\alpha $&$h$ & $e(\tau,h)$ &rate & $e(\tau,h)$ &rate  & $e(\tau,h)$ &rate \\
 \hline
1.1&$0.1$   & $    1.6380e-05      $ &   $         $   & $  8.8399e-03   $ &   $               $  & $   7.6498e+12   $ &   $         $ \\
   &$0.05$  & $    2.0669e-06     $ &   $      2.9864  $   &  $   1.3637e-03      $ &   $  2.6965       $  &  $   9.9538e+12   $ &   $    -3.7982   $ \\
   &$0.025$ & $   2.5929e-07       $ & $   2.9948    $     &  $     1.8391e-04   $  &   $   2.8905   $  &  $    2.7379e+12    $  &   $    1.8622     $   \\
   &$0.0125$& $     3.2469e-08     $ &   $     2.9974     $     &  $    2.3731e-05  $  &   $  2.9541    $   &  $  4.5086e+11    $  &   $   2.6023      $  \\
 \hline
1.5&$0.1$   & $   2.4847e-04        $ &   $          $   & $    1.2943e-01    $ &   $          $ & $   6.1451e+19     $ &   $          $  \\
   &$0.05$  &  $ 3.2000e-05           $ & $    2.9569   $   &  $  2.0605e-02   $ &   $   2.6510  $  &  $   9.9692e+13    $ &   $    19.234    $    \\
   &$0.025$ &  $    4.0577e-06     $  &  $     2.9794   $  &  $  2.8644e-03    $  &  $   2.8467    $  &  $ 2.9406e+13         $  &  $ 1.7614      $   \\
   &$0.0125$&  $    5.1080e-07        $  &   $   2.9898    $ &  $  3.7550e-04       $  &   $   2.9314      $  &  $  5.0734e+12    $  &   $    2.5351   $   \\
\hline

1.9&$0.1$   & $   7.3915e-04    $ &   $      $         &   $     3.8731e-01    $ &   $         $ &     $     6.8193 e+68    $ &   $         $  \\
   &$0.05$  &  $     9.5077e-05     $ &   $   2.9587     $    &  $  6.0281e-02        $ &   $ 2.6837     $  &  $      NaN     $ &   $  NaN     $  \\
   &$0.025$ &  $    1.2015e-05    $  &  $     2.9842   $     &  $  8.2903e-03       $  &  $  2.8622     $  &  $    NaN   $  &  $     NaN   $   \\
   &$0.0125$&  $    1.5089e-06    $  &  $     2.9933    $  &  $  1.0786e-03        $ &  $      2.9423      $  &  $  NaN      $  &  $    NaN      $   \\
\hline
\end{tabular}}\label{tex5555}
\end{table}

Table \ref{tex5.2t1} confirms the convergence orders of the scheme of the right tempered fractional diffusion equation and shows the required regularity for the to be approximated solution. And Table \ref{tex5555} verifies the required stability condition $\lambda h<1$.


\begin{example}
The following two dimensional   fractional diffusion problem
\begin{equation}\label{TE5.3}
\frac{\partial u(x,y,t)}{\partial t}=\,_0D_x^{\alpha ,\lambda_1 }u(x,y,t) +\,_0D_y^{\beta,\lambda_2  }u(x,y,t) +f(x,y,t)
\end{equation}
is considered   in the domain $ \Omega= (0,1)^2$ and $t\in(0,1]$.
The source term is
$$
\begin{array}{lll}
\displaystyle
f(x,t)
=-e^{-t-\lambda_1 x-\lambda_2 y}
 \left[\left(x^{4}+\frac{\Gamma(5) x^{4 -  \alpha}}{\Gamma(5 - \alpha)}-\alpha\lambda_1^{\alpha-1} (4 x ^{3}-\lambda_1 x^{4})-\lambda_1^\alpha x ^{4}\right.\right.\\\\
\displaystyle~~~~    ~~~~   ~~~~  ~~~~  ~~~~   ~~~~  ~~~~  ~~~~ ~~~~
  \left. \left.    -\left(  x^5+\frac{\Gamma(6) x^{5 -  \alpha}}{\Gamma(6 - \alpha)}-\alpha\lambda_1^{\alpha-1} (5 x ^{4}-\lambda_1 x^5)-\lambda_1^\alpha x ^5 \right)\right)y^{4}(1-y)\right. +\\\\
\displaystyle~~~~    ~~~~   ~~~~  ~~~~  ~~~~   ~~~~  ~~~~  ~~~~
\left. \left( \frac{\Gamma(5) y^{4 - \beta}}{\Gamma(5 -\beta)}-\beta\lambda_2^{\beta-1} (4 y^{3}-\lambda_2 y^4)-\lambda_2^\beta y ^4
 -\left(   \frac{\Gamma(6) y^{5 - \beta}}{\Gamma(6 - \beta)}-\beta\lambda_2^{\beta-1} (5 y ^{4}-\lambda_2 y^5)-\lambda_2^\beta y^5 \right)\right)x ^{4}(1-x)\right].
\end{array}
$$
The exact solution is given by  $u(x,y,t)= e^{-t-\lambda_1 x-\lambda_2 y}x^{4} (1-x) y^{4} (1-y)$. The boundary conditions are $u(0,y,t)=u(x,0,t)=u(x,1,t)=u(1,y,t)=0$ with $(x,y)\in\partial \Omega$ and $t\in[0,1]$.  The initial value is  $u(x,y,0)=e^{-\lambda_1 x-\lambda_2 y}x^{4} (1-x) y^{4} (1-y)$ with $ (x,y)\in[0,1]^2$.
\end{example}

To solve (\ref{TE5.3}), we derive the quasi-compact D'yakonov ADI scheme in matrix form as
 \begin{equation*}
    \begin{array}{lll}
\displaystyle \left( B_l^{ \alpha } -\frac{\tau}{2}P^{ \alpha }_l\right) U^{*}=\left(B_l^{ \alpha } +\frac{\tau}{2}P^{ \alpha }_l\right) U^{n}\left(B_l^{ \beta }+ \frac{\tau}{2}P^{ \beta }_l\right)^{T} + \tau B_l^{ \alpha } f^{n+1/2} \left(B_l^{\beta }\right)^{T},\\\\
\displaystyle U^{n+1}\left(B_l^{\beta }-\frac{\tau}{2}P^{ \beta }_l\right)^{T}=U^{*}.
 \end{array}
   \end{equation*}

In this example, we take the uniform meshes with the space step size $h_y=h_x$  and the time step size $\tau=h^\frac{3}{2}$.
From Table \ref{tt5.3}, it can be seen that the numerical results are stable and convergent, and the third order accuracy in space is verified.

\begin{table}[htbp]
\centering\small
\caption{The convergence orders of the quasi-compact D'yakonov ADI scheme by computing (\ref{TE5.3})  at $t=1$ with $\tau=h^\frac{3}{2}$ and $\lambda_1=\lambda_2=\lambda$.} {\begin{tabular} {@{}cccccccc@{}} \hline
  & &  \multicolumn{2}{c}{$(\alpha,\beta)=(1.2,1.5)\,\&\,\lambda=0.1$} & \multicolumn{2}{c}{$(\alpha,\beta)=(1.5,1.9)\,\&\,\lambda=0.1$} & \multicolumn{2}{c}{$(\alpha,\beta)=(1.2,1.5)\,\&\,\lambda=10$} \\
\cline{3-4} \cline{5-6} \cline{7-8}
D'yakonov &$h_x(h_y=h_x)$ & $e(\tau,h)$ &rate & $e(\tau,h)$ &rate & $e(\tau,h)$ &rate \\
 \hline
&$0.1$ &$  6.7629e-06   $ & $  $      & $   9.4666e-06 $ & $   $     & $   3.6415e-10  $ &   $ $ \\
         &$0.05 $& $ 8.4125e-07  $ & $ 3.0070 $& $   1.2061e-06 $ & $2.9725 $ &  $  6.1485e-11 $ &   $   2.5662   $ \\
         &$ 0.025$ & $ 1.0543e-07$ & $ 2.9962 $& $  1.5279e-07 $  & $2.9807 $ & $ 9.6300e-12  $  &    $     2.6746   $ \\
         &$ 0.0125$ & $1.3164e-08$ & $ 3.0017 $& $  1.9169e-08 $  & $2.9947 $ &  $ 1.3539e-12 $  &    $    2.8304 $  \\
\hline
\end{tabular}}\label{tt5.3}
\end{table}

\begin{example}\label{eq5.3}
The following   two sided fractional diffusion problem
\begin{equation}\label{ex333}
\frac{\partial u(x, t)}{\partial t}=\,_0D_x^{\alpha ,\lambda  }u(x,t) +\,_xD_{1}^{\alpha,\lambda }u(x,t) +f(x,t)
\end{equation}
is considered in the domain $\Omega=(0,1)$ and $t\in(0,1]$. The exact solution is given by  $u(x,t)= e^{-t-\lambda x}x^{4}(1-x)^{4}$. The boundary conditions are $u(0,t)=u(1,t)=0$ with $t\in[0,1]$.
 The initial value is  $u(x,0)=e^{-\lambda x}x^{4}(1-x)^{4}$.

To get the source term, we need to calculate the right fractional derivative $\,_xD_{1}^{(\alpha,\lambda )}u(x,t)$ of the exact solution and obtain that
$$
\begin{array}{lll}
\displaystyle
\,_xD_{1}^{(\alpha,\lambda )}e^{-\lambda x}x^{4}(1-x)^{4}\\\\
\displaystyle
=e^{ \lambda x}\,_xD_{1}^{ \alpha }e^{-2\lambda x}x^{4}(1-x)^{4}\\\\
\displaystyle
=e^{\lambda (x-2)}\,_xD_{1}^{\alpha  }\left(e^{2\lambda(1-x)}\sum_{m=0}^{4}((-1)^m\left(\begin{array}{c}4\\m\end{array}\right)(1-x)^m) \right)
\\\\
\displaystyle
=e^{\lambda (x-2)}\,_xD_{1}^{\alpha  }\left(\sum_{j=0}^{\infty}\frac{(2\lambda(1-x))^j}{j!} \sum_{m=0}^{4}((-1)^m\left(\begin{array}{c}4\\m\end{array}\right)(1-x)^m) )\right)\\\\
\displaystyle
=e^{\lambda (x-2)}\sum_{j=0}^{\infty} \frac{(2\lambda )^j}{j!}\sum_{m=0}^{4}\left((-1)^m\left(\begin{array}{c}4\\m\end{array}\right) \frac{\Gamma(5+m+j) }{\Gamma(5+m+j-\alpha)}(1-x)^{j+ 4+m-\alpha}    \right)
 \\\\
\displaystyle
\simeq e^{\lambda (x-2)}\sum_{j=0}^{50} \frac{(2\lambda )^j}{j!}\sum_{m=0}^{4}\left((-1)^m\left(\begin{array}{c}4\\m\end{array}\right) \frac{\Gamma(5+m+j) }{\Gamma(5+m+j-\alpha)}(1-x)^{j+ 4+m-\alpha}    \right).
\end{array}
$$

Then the source term
$$
\begin{array}{lll}
\displaystyle
f(x,t)\simeq-e^{-t }\left[e^{ -\lambda x}\left(x^4(1-x)^{4}+ \sum_{m=0}^{4}\left((-1)^m\left(\begin{array}{c}4\\m\end{array}\right) \frac{\Gamma(5+m) }{\Gamma(5+m-\alpha)}x^{ 4+m-\alpha}    \right)  
-2\lambda^\alpha x^{4}(1-x)^{4}  \right) \right.
\\\\\displaystyle~~~~ ~~~~  ~~~~ ~~~~
 \left. +e^{\lambda (x-2)}\sum_{j=0}^{50} \frac{(2\lambda )^j}{j!}\sum_{m=0}^{4}\left((-1)^m\left(\begin{array}{c}4\\m\end{array}\right) \frac{\Gamma(5+m+j) }{\Gamma(5+m+j-\alpha)}(1-x)^{j+ 4+m-\alpha} \right)   \right] .
\end{array}
$$

\end{example}

Since the compact difference operator  $_L\mathcal{B}$ commutes with $_R\mathcal{B}$,
to solve (\ref{ex333}), we introduce the operator splitting method to derive the numerical schemes and the matrix forms of the schemes are as follows:
 \begin{equation*}
    \begin{array}{lll}
\displaystyle   B_l^{ \alpha } U^{*}=(B_l^{ \alpha } + \tau P^{ \alpha }_l) U^{n} + \frac{\tau}{2} B_l^{ \alpha } f^{n+1/2} ,\\\\
\displaystyle (B_r^{ \alpha }- \tau P^{\alpha}_r)U^{n+1} =  B_r^{ \alpha } U^{*}+ \frac{\tau}{2} B_r^{ \alpha } f^{n+1/2}.
 \end{array}
   \end{equation*}

Table \ref{exbb33} shows the numerical results obtained by using the quasi-compact operator splitting method to solve  (\ref{ex333}) with $\tau=h^3$. It can be noted that the convergence orders are three in spacial direction which confirms the theoretical estimations.

\begin{table}[htbp]
\centering\small
\caption{The errors $e(\tau,h)$ and spatial convergence orders of the operator splitting scheme by computing Example \ref{eq5.3} with $\lambda=0.1$.} {\begin{tabular} {@{} ccccccc@{}} \hline
    &  \multicolumn{2}{c}{$ \alpha = 1.2 $} & \multicolumn{2}{c}{$ \alpha = 1.5 $} & \multicolumn{2}{c}{$ \alpha =1.8$} \\
\cline{2-3} \cline{4-5} \cline{6-7}
   $h$ & $e(\tau,h)$ &rate & $e(\tau,h)$ &rate & $e(\tau,h)$ &rate \\
 \hline
          $0.1$ &$     4.0467e-06  $ & $  $      & $   5.8747e-06   $ & $   $     & $    8.4320e-06   $ &   $ $ \\
          $0.05 $& $   5.6875e-07  $ & $  2.8309  $& $  7.8215e-07    $ & $ 2.9090  $ &  $   9.7711e-07  $ &   $   3.1093     $ \\
          $ 0.025$ & $  7.5560e-08 $ & $  2.9121  $& $  9.8146e-08   $  & $  2.9944 $ & $  1.0509e-07   $  &    $    3.2168      $ \\
          $ 0.0125$ & $ 9.7675e-09 $ & $   2.9516 $& $ 1.2211e-08    $  & $  3.0067 $ &  $ 1.1657e-08   $  &    $    3.1724   $  \\
\hline
\end{tabular}}\label{exbb33}
\end{table}

\section{Conclusions}
Sometimes the tempered power-law diffusions, instead of pure power-law diffusions, are more physical/reasonable choice in practical applications. This paper focuses on providing the quasi-compact schemes for the tempered fractional diffusion equations. Not only its derivation but the proof of its numerical stability and convergence are much different from the ones of the fractional diffusion equations. The detailed theoretical results are presented, and some techniques are introduced in the analysis. Extensive numerical simulations are performed to show the effectiveness of the schemes, and the third order convergence is confirmed.

 \appendix
 \renewcommand{\appendixname}{Appendix~ }
  \section{Some Lemmas}
Define
\begin{equation} \label{dfid}
 \begin{array}{l }
\displaystyle
\,_a I_x^{(p,\lambda)}u(x)=\frac{e^{-\lambda x}}{\Gamma(p)} \int^{x}_{a}(x-s)^{ p-1 }e^{\lambda s}u(s) ds,\\\\
\displaystyle
\,_x I_{b}^{(p,\lambda)}u(x)=\frac{ e^{\lambda x}}{\Gamma(p)}  \int^{b}_{x}(s -x)^{ p-1 }e^{-\lambda s}u(s) ds ,\\\\
\displaystyle
\,_a D_x^{(p,\lambda)}u(x)=\frac{e^{-\lambda x}}{\Gamma(m-p)}\frac{d^m}{dx^m} \int^{x}_{a}(x-s)^{m-p-1 }e^{\lambda s}u(s) ds,\\\
\displaystyle
\,_x D_{b}^{(p,\lambda)}u(x)=\frac{(-1)^m e^{\lambda x}}{\Gamma(m-p)}\frac{d^m}{dx^m} \int^{b}_{x}(s -x)^{m-p-1 }e^{-\lambda s}u(s) ds,
\end{array}
\end{equation}
where $p$ is a positive constant  and  $m-1<p<m$.
Now we show some properties of the tempered fractional calculus.

\begin{lemma}
Let $u(x)$ be continuous on $[a,b]$, $ p,q>0$ and $ \lambda>0$. Then the integration of arbitrary real order has the properties:
\begin{equation}\label{la1}
  \,_a I_x^{(p,\lambda)}\left(\,_a I_x^{(q,\lambda)}u(x)\right)=\,_a I_x^{(p+q,\lambda)}u(x)=\,_a I_x^{(q,\lambda)}\left(\,_a I_x^{(p,\lambda)}u(x)\right)
\end{equation}
and
\begin{equation}\label{la2}
   \,_x I_{b}^{(p,\lambda)}\left(\,_x I_{b}^{(q,\lambda)}u(x)\right)=\,_x I_{b}^{(p+q,\lambda)}u(x)=\,_x I_{b}^{(q,\lambda)}\left(\,_x I_{b}^{(p,\lambda)}u(x)\right).
\end{equation}
\end{lemma}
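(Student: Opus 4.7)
The plan is to reduce the tempered semigroup property to the classical Riemann--Liouville semigroup property by means of the exponential conjugation
\[
 \,_a I_x^{(p,\lambda)} u(x) \;=\; e^{-\lambda x}\,\,_a I_x^{p}\!\bigl(e^{\lambda s}u(s)\bigr)(x),
\]
which is immediate from the definition \eqref{dfid} once one recognizes that the factor $(x-s)^{p-1}/\Gamma(p)$ is exactly the kernel of the standard left Riemann--Liouville integral $\,_a I_x^{p}$.

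With this identity in hand, the first step is to compose two tempered integrals: I would substitute $v(x):=\,_a I_x^{(q,\lambda)}u(x)=e^{-\lambda x}\,_a I_x^{q}(e^{\lambda s}u(s))(x)$ into the outer operator to get
\[
 \,_a I_x^{(p,\lambda)}\!\bigl(\,_a I_x^{(q,\lambda)}u\bigr)(x)
 = e^{-\lambda x}\,_a I_x^{p}\!\Bigl(e^{\lambda s}\cdot e^{-\lambda s}\,_a I_s^{q}\bigl(e^{\lambda r}u(r)\bigr)(s)\Bigr)(x)
 = e^{-\lambda x}\,_a I_x^{p}\!\bigl(\,_a I_s^{q}(e^{\lambda r}u(r))(s)\bigr)(x).
\]
The exponentials inside the outer kernel cancel exactly because of the $+\lambda s$ built into $\,_a I_x^{(p,\lambda)}$ meeting the $-\lambda s$ coming out of the inner tempered integral.

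The second step is to invoke the classical (untempered) semigroup law $\,_a I_x^{p}\,_a I_x^{q}=\,_a I_x^{p+q}=\,_a I_x^{q}\,_a I_x^{p}$, which is a standard consequence of Fubini's theorem applied to the iterated kernel and Euler's Beta--Gamma identity $\int_0^1 t^{p-1}(1-t)^{q-1}\,dt=\Gamma(p)\Gamma(q)/\Gamma(p+q)$. Applying it to the bracketed expression gives
\[
 \,_a I_x^{(p,\lambda)}\!\bigl(\,_a I_x^{(q,\lambda)}u\bigr)(x) = e^{-\lambda x}\,_a I_x^{p+q}\!\bigl(e^{\lambda s}u(s)\bigr)(x) = \,_a I_x^{(p+q,\lambda)}u(x),
\]
and exchanging the roles of $p$ and $q$ in the same argument yields the second equality in \eqref{la1}, establishing commutativity. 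The right-sided identity \eqref{la2} is handled identically after replacing the conjugation factor $e^{-\lambda x}$ by $e^{\lambda x}$ (and the inner weight $e^{\lambda s}$ by $e^{-\lambda s}$); nothing new is needed because the right RL operator $\,_x I_b^{p}$ also satisfies the classical semigroup law.

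I do not expect a serious obstacle: the only technical point is to justify Fubini in the step where the untempered semigroup law is used, and this is standard under the hypothesis that $u$ is continuous on $[a,b]$ (the integrand of the iterated integral is continuous, and the domain of integration is bounded, so absolute integrability is automatic and the exchange of order is legitimate). The conjugation trick turns what would otherwise be a bookkeeping exercise with double integrals into a one-line corollary of the classical result.
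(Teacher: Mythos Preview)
Your proposal is correct and follows essentially the same route as the paper: the paper's proof writes out the double integral for the composition, observes the same $e^{-\lambda s}e^{\lambda s}$ cancellation you point out, swaps the order of integration, and evaluates the inner integral via the substitution $s=x+\zeta(\eta-x)$ to recover the Beta identity $B(p,q)=\Gamma(p)\Gamma(q)/\Gamma(p+q)$. The only difference is organizational: you package the Fubini/Beta step as a citation of the classical semigroup law $\,_aI_x^{p}\,_aI_x^{q}=\,_aI_x^{p+q}$ after the conjugation, whereas the paper reproves that step in place; the underlying computation is identical.
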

\begin{proof}
Taking into account the definition of the integral  $\,_x I_{b}^{(p,\lambda)}u(x)$, we have
\begin{equation*}
 \begin{array}{l }
\displaystyle
  \,_x I_{b}^{(p,\lambda)}\left(\,_x I_{b}^{(q,\lambda)}u(x)\right)
  =\frac{ e^{\lambda x}}{\Gamma(p)}  \int^{b}_{x}(s -x)^{ p-1 }e^{-\lambda s}\,_s I_{b}^{(q,\lambda)}u(s) ds\\\\
 \displaystyle~~~~ ~~~~ ~~~~ ~~~~ ~~~~ ~~~~ ~~~~~
  =\frac{ e^{\lambda x}}{\Gamma(p)}  \int^{b}_{x}(s -x)^{ p-1 }e^{-\lambda s} \frac{ e^{\lambda s}}{\Gamma(q)}  \int^{b}_{s}(\eta -x)^{ q-1 }e^{-\lambda \eta}u(\eta) d\eta ds\\\\
\displaystyle~~~~ ~~~~ ~~~~ ~~~~ ~~~~ ~~~~ ~~~~~
  =\frac{ e^{\lambda x}}{\Gamma(p)\Gamma(q)}  \int^{b}_{x} e^{-\lambda \eta}u(\eta) d\eta   \int^{\eta}_{x}(s -x)^{ p-1 }(\eta -s)^{ q-1 }ds.
\end{array}
\end{equation*}
Here we use the substitution $s=x+\zeta(\eta-x)$ to evaluate the integration from $x$ to $\eta$, and obtain
\begin{equation*}
 \begin{array}{l }
\displaystyle
    \int^{\eta}_{x}(s -x)^{ p-1 }(\eta -s)^{ p-1 }ds
    = \int^{1}_{0}(\zeta(\eta-x))^{ p-1 }((1-\zeta)(\eta-x))^{ p-1 } (\eta-x) d\zeta
   =(\eta-x) ^{ p+q-1 }B(p,q) \\\\
 \displaystyle ~~~~  ~~~~ ~~~~ ~~~~ ~~~~ ~~~~ ~~~~  ~~~~ ~~~~ ~~~~~
   =(\eta-x) ^{ p+q-1 }\frac{\Gamma(p)\Gamma(p) }{\Gamma(p+q)},
\end{array}
\end{equation*}
where $B(p,q)$ is the Beta function. Therefore,
\begin{equation*}
 \begin{array}{l }
\displaystyle
  \,_x I_{b}^{(p,\lambda)}\left(\,_x I_{b}^{(q,\lambda)}u(x)\right)
  =\frac{ e^{\lambda x}}{\Gamma(p+q) }  \int^{b}_{x} e^{-\lambda \eta}(\eta-x) ^{ p+q-1 }u(\eta) d\eta \\\\
\displaystyle~~~~ ~~~~ ~~~~ ~~~~ ~~~~ ~~~~ ~~~~ ~
  =\,_a I_x^{(p+q,\lambda)}u(x).
\end{array}
\end{equation*}
Obviously, $p$ and $q$ can be interchanged. Then
\begin{equation*}
  \,_x I_{b}^{(p,\lambda)}\left(\,_x I_{b}^{(q,\lambda)}u(x)\right)=\,_x I_{b}^{(p+q,\lambda)}u(x)=\,_x I_{b}^{(q,\lambda)}\left(\,_x I_{b}^{(p,\lambda)}u(x)\right).
\end{equation*}
The proof of (\ref{la1}) is omitted here.
\end{proof}

\begin{lemma}\label{lemma3}
Let $k, m$ be positive integers, $ m-1<p <m $, $ \lambda>0$ and $u(x)\in C^{m+k-1}[a,b]$.
Then we have
 \begin{enumerate}
  \item  $\,_a D_x^{(k,\lambda)}\left(\,_a D_x^{(p,\lambda)}u(x)\right)=\,_a D_x^{(k+p,\lambda)}u(x) $,
  \item  $\,_x D_{b}^{(k,\lambda)}\left(\,_x D_{b}^{(p,\lambda)}u(x)\right)=\,_x D_{b}^{(k+p,\lambda)}u(x)$,
  \item  $ \,_a D_x^{(p,\lambda)}\left(\,_a D_x^{(k,\lambda)}u(x)\right)=\,_a D_x^{(k+p,\lambda)}u(x)- \sum\limits_{j=0}^{k-1}\frac{(e^{\lambda a}u(a))^{(j)}e^{-\lambda x}}{\Gamma(j+1-p-k)}(x-a)^{j-p-k}$,
  \item  $ \,_x D_{b}^{(p,\lambda)}\left(\,_x D_{b}^{(k,\lambda)}u(x)\right)=\,_x D_{b}^{(k+p,\lambda)}u(x) -\sum\limits_{j=0}^{k-1}\frac{(-1)^{k-j}\left(e^{-\lambda b}u(b)\right)^{(j)}e^{ \lambda x}}{\Gamma(j+1-p-k)}(b-x)^{j-p-k}$.
\end{enumerate}
\end{lemma}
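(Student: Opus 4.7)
The strategy is to reduce every identity to the corresponding classical Riemann--Liouville composition rule via the algebraic observation
\[
\,_a D_x^{(p,\lambda)}u(x) = e^{-\lambda x}\,_a D_x^{p}\bigl(e^{\lambda x}u(x)\bigr),\qquad
\,_x D_{b}^{(p,\lambda)}u(x) = e^{\lambda x}\,_x D_{b}^{p}\bigl(e^{-\lambda x}u(x)\bigr),
\]
which follow immediately by inspecting the definitions in (\ref{dfid}). With the substitutions $v(x)=e^{\lambda x}u(x)$ and $w(x)=e^{-\lambda x}u(x)$ respectively, any composition of tempered operators telescopes: the intermediate factor $e^{\lambda x}\cdot e^{-\lambda x}=1$ cancels, leaving a pure RL composition acting on $v$ or $w$, bordered by a single outer exponential prefactor.

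For parts (1) and (2), I would invoke the elementary fact that for $v\in C^{m+k-1}[a,b]$ with $m-1<p<m$, differentiating the defining integral $\,_a D_x^{p}v(x)=\frac{1}{\Gamma(m-p)}\frac{d^{m}}{dx^{m}}\int_a^x(x-s)^{m-p-1}v(s)\,ds$ another $k$ times just produces $\,_a D_x^{k+p}v(x)$; the right-sided statement is identical up to sign. Setting $v=e^{\lambda x}u(x)$ and pulling the outer $e^{-\lambda x}$ back out gives (1), and symmetrically for (2).

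For parts (3) and (4), the key ingredient is the classical one-sided composition
\[
\,_a D_x^{p}\bigl(\,_a D_x^{k}v(x)\bigr)=\,_a D_x^{k+p}v(x)-\sum_{j=0}^{k-1}\frac{v^{(j)}(a)}{\Gamma(j+1-p-k)}(x-a)^{j-p-k},
\]
obtained by integrating by parts $k$ times inside the Riemann--Liouville integral and collecting the residues at $s=a$. Substituting $v(x)=e^{\lambda x}u(x)$, identifying $v^{(j)}(a)$ with $\bigl(e^{\lambda x}u(x)\bigr)^{(j)}\big|_{x=a}$ (the quantity the paper abbreviates as $(e^{\lambda a}u(a))^{(j)}$), and multiplying through by $e^{-\lambda x}$ yields (3). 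Part (4) is treated identically using the mirrored right-sided formula, which is where the extra factor $(-1)^{k-j}$ enters, reflecting the $(-1)^{m}$ in the definition of $\,_x D_b^{p}$ in (\ref{dfid}).

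The only real obstacle is verifying the classical boundary-term formula underlying (3) and (4): the signs $(-1)^{k-j}$ and the shifted arguments $j+1-p-k$ in the Gamma function are easy to misplace. I would settle this once by induction on $k$, taking (1)/(2) as the $k=1$ base case of the \emph{mixed} composition and iterating; alternatively, one expands $v$ in its Taylor polynomial plus remainder at $x=a$, applies $\,_a D_x^{p}$ term by term, and identifies the non-integrable monomial contributions $(x-a)^{j-p-k}/\Gamma(j+1-p-k)$. Everything else reduces to tracking the conjugation by $e^{\pm \lambda x}$, which is routine.
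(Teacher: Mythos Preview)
Your proposal is correct and follows essentially the same path as the paper. The paper's proof of parts~(1) and~(2) is literally your conjugation argument written out: it expands $\,_a D_x^{(k,\lambda)}\bigl(\,_a D_x^{(p,\lambda)}u\bigr)=e^{-\lambda x}\frac{d^k}{dx^k}\bigl(e^{\lambda x}\cdot e^{-\lambda x}\,_a D_x^{p}(e^{\lambda x}u)\bigr)$, lets the exponentials cancel, and reads off the definition of $\,_a D_x^{(k+p,\lambda)}$. For parts~(3) and~(4) the paper does not quote the classical RL boundary-term formula but re-derives its tempered analogue from scratch: it first proves the Taylor-type identity $\,_a I_x^{(k,\lambda)}\bigl(\,_a D_x^{(k,\lambda)}u\bigr)=u-\sum_{j=0}^{k-1}\frac{(e^{\lambda a}u(a))^{(j)}e^{-\lambda x}}{\Gamma(j+1)}(x-a)^{j}$ by integrating by parts, then uses the factorization $\,_a D_x^{(p,\lambda)}=\,_a D_x^{(p+k,\lambda)}\circ\,_a I_x^{(k,\lambda)}$ together with the tempered power rule $\,_a D_x^{(p+k,\lambda)}\bigl(e^{-\lambda x}(x-a)^{j}\bigr)=\frac{\Gamma(1+j)e^{-\lambda x}}{\Gamma(1+j-p-k)}(x-a)^{j-p-k}$. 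Your reduction to the classical identity on $v=e^{\lambda x}u$ packages the same computation more economically; the paper's version is more self-contained but otherwise identical in content.
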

\begin{proof}
The proofs of 1 and 2:

 From (\ref{dfid}), we get
\begin{equation*}
 \begin{array}{l }
\displaystyle
 \,_a D_x^{(k,\lambda)}\left(\,_a D_x^{(p,\lambda)}u(x)\right)
 =  e^{-\lambda x} \frac{d^k}{dx^k} \left( e^{\lambda x}\,_a D_x^{(p,\lambda)}u(x)\right)
 \\\\
\displaystyle~~~~ ~~~~ ~~~~ ~~~~ ~~~~ ~~~~ ~~~~ ~~~~
 =  e^{-\lambda x} \frac{d^k}{dx^k} \left ( e^{\lambda x}\frac{e^{-\lambda x}}{\Gamma(m-p)}\frac{d^m}{dx^m} \int^{x}_{a}(x-s)^{m-p-1 }e^{\lambda s}u(s) ds\right)
  \\\\
\displaystyle~~~~ ~~~~ ~~~~ ~~~~ ~~~~ ~~~~ ~~~~ ~~~~
 = \frac{ e^{-\lambda x}}{ \Gamma(m-p)} \frac{d^k}{dx^k} \left(  \frac{d^m}{dx^m} \int^{x}_{a}(x-s)^{m-p-1 }e^{\lambda s}u(s) ds \right)
  \\\\
\displaystyle~~~~ ~~~~ ~~~~ ~~~~ ~~~~ ~~~~ ~~~~ ~~~~
 = \frac{ e^{-\lambda x}}{ \Gamma(m+k-p-k)} \frac{d^{k+m}}{dx^{k+m}} \left(  \int^{x}_{a}(x-s)^{m+k-p-k-1 }e^{\lambda s}u(s) ds \right)
 \\\\
\displaystyle~~~~ ~~~~ ~~~~ ~~~~ ~~~~ ~~~~ ~~~~  ~~~~
 =  \,_a D_x^{(k+p,\lambda)}u(x)
\end{array}
\end{equation*}
and
\begin{equation*}
 \begin{array}{l }
\displaystyle
\,_x D_{b}^{(k,\lambda)}\left(\,_x D_{b}^{(p,\lambda)}u(x)\right)
 = (-1)^k e^{ \lambda x} \frac{d^k}{dx^k} \left( e^{-\lambda x}\frac{(-1)^me^{\lambda x}}{\Gamma(m-p)}\frac{d^m}{dx^m} \int^{x}_{a}(s-x)^{m-p-1 }e^{-\lambda s}u(s) ds\right)
  \\\\
\displaystyle~~~~ ~~~~ ~~~~ ~~~~ ~~~~ ~~~~ ~~~~ ~~~~
 = \frac{(-1)^{k+m} e^{ \lambda x}}{\Gamma(m-p)} \frac{d^{k+m}}{dx^{k+m}} \left(  \int^{x}_{a}(s-x)^{m-p-1 }e^{-\lambda s}u(s) ds\right)
 \\\\
\displaystyle~~~~ ~~~~ ~~~~ ~~~~ ~~~~ ~~~~ ~~~~  ~~~~
 =\,_x D_{b}^{(k+p,\lambda)}u(x).
 \end{array}
\end{equation*}

The proof of 3:

To consider the operator $ \,_a D_x^{(p,\lambda)}\left(\,_a D_x^{(k,\lambda)}u(x)\right)$, we must mention that
\begin{equation}\label{lemma31}
 \begin{array}{l }
\displaystyle
  \,_a I_{x}^{(k,\lambda)}\left(\,_a D_{x}^{(k,\lambda)}u(x)\right)
  =\frac{ e^{-\lambda x}}{ (k-1)!}  \int^{x}_{a}(x -s)^{ k-1 }e^{ \lambda s}\,_a D_{s}^{(k,\lambda)}u(s) ds\\\\
 \displaystyle~~~~ ~~~~ ~~~~ ~~~~ ~~~~ ~~~~ ~~~~ ~~
  =\frac{ e^{-\lambda x}}{ (k-1)!}  \int^{x}_{a}(x -s)^{ k-1 } \frac{d^k}{ds^k} e^{ \lambda s}u(s)  ds\\\\
\displaystyle~~~~ ~~~~ ~~~~ ~~~~ ~~~~ ~~~~ ~~~~ ~~
  =u(x)- \sum\limits_{j=0}^{k-1}\frac{(e^{\lambda a}u(a))^{(j)}e^{-\lambda x}(x-a)^{j }}{\Gamma(j+1 )} ,
\end{array}
\end{equation}
where $(e^{\lambda a}u(a))^{(j)}$ is the differentiation of order $j$ of the function $e^{\lambda x}u(x)$ at point $x=a$; 
\begin{equation}\label{lemma32}
 \begin{array}{l }
\displaystyle
   \,_a D_{x}^{(p,\lambda)}u(x)
  = \frac{e^{-\lambda x}}{\Gamma(m-p)}\frac{d^m}{dx^m} \int^{x}_{a}(x-s)^{m-p-1 }e^{\lambda s}u(s) ds\\\\
 \displaystyle~~~~ ~~~~ ~~~~ ~~~~ ~~
  =e^{-\lambda x}\frac{d^m}{dx^m}e^{ \lambda x} \frac{e^{-\lambda x}}{\Gamma(m-p)} \int^{x}_{a}(x-s)^{m-p-1 }e^{\lambda s}u(s) ds\\\\
 \displaystyle~~~~ ~~~~ ~~~~ ~~~~ ~~
  =  \,_a D_{x}^{(m,\lambda)}\left(\,_a I_{x}^{(m-p,\lambda)}u(x)\right)
\end{array}
\end{equation}
and
\begin{equation}\label{lemma33}
   \,_a D_{x}^{(p+k,\lambda)}\left(\,_a I_{x}^{(k,\lambda)}u(x)\right)
  = \,_a D_{x}^{(m+k,\lambda)} \,_a I_{x}^{(m-p,\lambda)}\left(\,_a I_{x}^{(k,\lambda)}u(x)\right)
  = \,_a D_{x}^{(m+k,\lambda)}\left( \,_a I_{x}^{(k+m-p,\lambda)} u(x)\right)= \,_a D_{x}^{(p,\lambda)}  u(x).
\end{equation}
Using (\ref{lemma31}), (\ref{lemma32}),  (\ref{lemma33}) and
\[  \,_a D_{x}^{(p ,\lambda)}(e^{-\lambda x}(x-a)^{j })=\frac{\Gamma(1+j)e^{-\lambda x}}{\Gamma(1+j-p)}(x-a)^{j-p },\]
we obtain
\begin{equation*}
 \begin{array}{l }
\displaystyle
  \,_a D_x^{(p,\lambda)}\left(\,_a D_x^{(k,\lambda)}u(x)\right)
  = \,_a D_{x}^{(p+k,\lambda)}\left(\,_a I_{x}^{(k,\lambda)} \,_a D_{x}^{( k,\lambda)}u(x)\right)\\\\
 \displaystyle~~~~ ~~~~ ~~~~ ~~~~ ~~~~ ~~~~ ~~~~ ~~~~
  = \,_a D_{x}^{(p+k,\lambda)}\left(u(x)- \sum\limits_{j=0}^{k-1}\frac{(e^{\lambda a}u(a))^{(j)}e^{-\lambda x}(x-a)^{j }}{\Gamma(j+1 )}\right)\\\\
 \displaystyle~~~~ ~~~~ ~~~~ ~~~~ ~~~~   ~~~~ ~~~~  ~~~~
  =\,_a D_x^{(k+p,\lambda)}u(x)- \sum\limits_{j=0}^{k-1}\frac{(e^{\lambda a}u(a))^{(j)}e^{-\lambda x}}{\Gamma(j+1-p-k)}(x-a)^{j-p-k}.
\end{array}
\end{equation*}

The proof of 4:

Since
\begin{equation*}
 \begin{array}{l }
\displaystyle
  \,_x I_{b}^{(k,\lambda)}\left(\,_x D_{b}^{(k,\lambda)}u(x)\right)
  =\frac{(-1)^k e^{\lambda x}}{ (k-1)!}  \int^{x}_{a}( s-x)^{ k-1 } \frac{d^k}{ds^k}(e^{- \lambda s} u(s)) ds\\\\
\displaystyle~~~~ ~~~~ ~~~~ ~~~~ ~~~~ ~~~~ ~~~~ ~~
  =u(x)- \sum\limits_{j=0}^{k-1}\frac{(-1)^{k-j}(e^{-\lambda b}u(b))^{(j)}e^{ \lambda x}(b-x)^{j }}{\Gamma(j+1 )} ,
\end{array}
\end{equation*}
\begin{equation*}
 \begin{array}{l }
\displaystyle
   \,_x D_{b}^{(p,\lambda)}u(x)
  =  \,_x D_{b}^{(m,\lambda)}\left(\,_x I_{b}^{(m-p,\lambda)}u(x)\right), \quad  \,_x D_{b}^{(p+k,\lambda)}\left(\,_x I_{b}^{(k,\lambda)}u(x)\right)
 = \,_x D_{b}^{(p,\lambda)}  u(x)
\end{array}
\end{equation*}
and
\[  \,_x D_{b}^{(p ,\lambda)}(e^{ \lambda x}(b-x)^{j })=\frac{\Gamma(1+j)e^{ \lambda x}}{\Gamma(1+j-p)}(b-x)^{j-p },\]
we obtain
\begin{equation*}
 \begin{array}{l }
\displaystyle
  \,_x D_b^{(p,\lambda)}\left(\,_x D_b^{(k,\lambda)}u(x)\right)
  = \,_x D_{b}^{(p+k,\lambda)}\left(\,_x I_{b}^{(k,\lambda)} \,_x D_{b}^{( k,\lambda)}u(x)\right)\\\\
 \displaystyle~~~~ ~~~~ ~~~~ ~~~~ ~~~~ ~~~~ ~~~~ ~~~~
  = \,_x D_{b}^{(p+k,\lambda)}(u(x)- \sum\limits_{j=0}^{k-1}\frac{(-1)^{k-j}(e^{-\lambda b}u(b))^{(j)}e^{ \lambda x}(b-x)^{j }}{\Gamma(j+1 )})\\\\
 \displaystyle~~~~ ~~~~ ~~~~ ~~~~ ~~~~   ~~~~ ~~~~  ~~~~
  =\,_x D_b^{(k+p,\lambda)}u(x)- \sum\limits_{j=0}^{k-1}\frac{(-1)^{k-j}(e^{-\lambda b}u(b))^{(j)}e^{ \lambda x}}{\Gamma(j+1-p-k)}(b-x)^{j-p-k}.
\end{array}
\end{equation*}

\end{proof}
\begin{lemma}
Let $k, m$ be positive integers, $ m-1<p <m $, $ \lambda>0$ and $u(x)$ be $(m+k-1)$-times continuously differentiable  on $(-\infty,+\infty)$. Then we have
 \begin{enumerate}
  \item  $\,_{-\infty} D_x^{(k,\lambda)}\left(\,_{-\infty} D_x^{(p,\lambda)}u(x)\right)=\,_{-\infty} D_x^{(k+p,\lambda)}u(x) $,
  \item  $\,_x D_{+\infty}^{(k,\lambda)}\left(\,_x D_{+\infty}^{(p,\lambda)}u(x)\right)=\,_x D_{+\infty}^{(k+p,\lambda)}u(x)$.
\end{enumerate}
\end{lemma}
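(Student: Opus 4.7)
My plan is to adapt the argument used in parts 1 and 2 of Lemma \ref{lemma3} to the whole-line setting. The crucial simplification is that replacing the finite endpoint $a$ by $-\infty$ (respectively $b$ by $+\infty$) eliminates any boundary contribution, so no correction terms analogous to those appearing in parts 3 and 4 of that lemma arise here. In other words, the whole-line version is the cleanest case of the composition rule and should follow by essentially the same chain of identities as parts 1--2, with the lower (upper) limit of integration changed to $-\infty$ ($+\infty$).

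For the first assertion, since $k$ is a positive integer the outer operator reduces to $\,_{-\infty} D_x^{(k,\lambda)} v(x) = e^{-\lambda x}\frac{d^k}{dx^k}\bigl(e^{\lambda x} v(x)\bigr)$. First I would substitute $v = \,_{-\infty} D_x^{(p,\lambda)} u$ using the integral representation from (\ref{dfid}), observe the cancellation of the exponential factors $e^{\lambda x}$ and $e^{-\lambda x}$, and then merge the iterated derivative $\frac{d^k}{dx^k}\frac{d^m}{dx^m}$ into $\frac{d^{k+m}}{dx^{k+m}}$. Since $m-1 < p < m$ forces $(m+k)-1 < k+p < m+k$, the kernel exponent $m-p-1$ coincides with $(m+k)-(k+p)-1$, so the resulting expression matches the definition of $\,_{-\infty} D_x^{(k+p,\lambda)} u(x)$ with the ceiling integer now taken to be $m+k$. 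This is the exact analog of the computation already carried out at the start of the proof of Lemma \ref{lemma3}.

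The second assertion is obtained by a mirror-image calculation, using $\,_x D_{+\infty}^{(k,\lambda)} v(x) = (-1)^k e^{\lambda x}\frac{d^k}{dx^k}\bigl(e^{-\lambda x} v(x)\bigr)$ together with the right-sided integral representation of $\,_x D_{+\infty}^{(p,\lambda)} u$. One only has to verify that the sign factor $(-1)^{k+m}$ produced after composition agrees with the $(-1)^{m+k}$ prescribed by the definition of the order-$(k+p)$ right derivative, which is automatic.

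The only real subtlety, and what I expect to be the main (mild) obstacle, is justifying the interchange of $\frac{d^k}{dx^k}$ with the improper integral over $(-\infty,x)$ (respectively $(x,+\infty)$) and with the further $\frac{d^m}{dx^m}$. The pointwise assumption that $u$ is $(m+k-1)$-times continuously differentiable on $\mathbb{R}$ is not by itself sufficient; one must also implicitly assume that $u$ is temperate enough for $\,_{-\infty} D_x^{(p,\lambda)} u$ and $\,_{-\infty} D_x^{(k+p,\lambda)} u$ to be well defined. Under such regularity, which is standard in this framework and is facilitated by the exponential tempering factor $e^{\lambda s}\to 0$ as $s\to-\infty$ (analogously $e^{-\lambda s}\to 0$ as $s\to+\infty$), differentiation under the integral sign is valid by a routine dominated-convergence argument, and the chain of identities above yields both equalities.
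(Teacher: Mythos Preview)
Your proposal is correct and follows exactly the approach the paper uses for parts 1 and 2 of Lemma~\ref{lemma3}; indeed, the paper states this whole-line lemma without proof, presumably because the argument is identical to the finite-interval case with $a$ replaced by $-\infty$ and $b$ by $+\infty$. Your additional remarks about the integrability conditions needed to justify differentiation under the improper integral go slightly beyond what the paper addresses, but they are apt and do not affect the core argument.
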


\bibliographystyle{elsarticle-num}
\bibliography{<your-bib-database>}

\end{document}